\documentclass[11pt,oneside,reqno]{amsart}      
\usepackage[a4paper,margin=1in]{geometry}  
\usepackage{amsmath, amssymb, amsthm}     
\usepackage{graphicx}                    
\usepackage{hyperref}                     
\usepackage{times}                        
\usepackage{stmaryrd}                       
\newtheorem{thm}{Theorem}[section]
\newtheorem{cor}[thm]{Corollary}
\newtheorem{lem}[thm]{Lemma}

\newtheorem{ex}[thm]{Example}

\title{Sub-Randers metrics}
\author{Layth M. Alabdulsada}
\address{Department of Mathematics, College of Science, University of Al-Qadisiyah, Al-Qadisiyah, 58001, Iraq}
\email{layth.muhsin@qu.edu.iq}

\subjclass[2020]{53C60, 53C17, 53C22, 58B20} \keywords{Sub-Riemannian geometry, Sub-Finsler geometry,   Sub-Randers metrics, geodesics, Zermelo navigation problem, Hopf-Rinow theorem.}

\begin{document}

\maketitle

\begin{abstract}
We introduce a new class of sub-Finsler metrics, called sub-Randers metrics, obtained by adding a one-form $\beta \in \Gamma(\mathcal{D}^*)$ to a sub-Riemannian metric $a$ on a bracket-generating distribution $\mathcal{D} \subset TM$.
We define a sub-Randers manifold as a triple $(M, \mathcal{D}, F)$, where $M$ is an $n$-dimensional smooth manifold and $F(v) = \sqrt{a(v,v)} + \beta(v)$, the condition $\|\beta\|_a < 1$ ensures positive definiteness and convexity.  Explicit equations for sub-Randers normal geodesics are derived, and we show that normal geodesics depend on $\beta$ while abnormal geodesics are determined solely by the bracket-generating distribution $\mathcal{D}$.
Furthermore, we show that Zermelo navigation on $\mathcal{D}$ naturally generates sub-Randers normal geodesics.
Finally, we prove a Hopf-Rinow type theorem which guarantees the existence of minimizing geodesics despite asymmetry, generalizing classical results to the sub-Randers setting.
\end{abstract}
\section{Introduction}
The Randers metric was originally proposed by G. Randers in 1941 as a model for electromagnetic effects in general relativity \cite{randers}. Later, R. S. Ingarden and others formalized its mathematical structure as a Finsler metric formed by adding a one-form to a Riemannian norm. In modern terms, a Randers metric takes the form
$$
F(v) = \sqrt{a(v,v)} + \beta(v),
$$
where $a$ is a Riemannian metric on $M$ and $\beta$ is a one-form satisfying $\|\beta\|_a < 1$.
In Finsler geometry, Randers spaces are one of the most simple and well known classes. Due to its simple form, it is possible to compute geodesics, curvature, and volume forms in more direct way.
For this reason, Randers metrics are used in many theoretical studies and applications, (see \cite{bao, bao04}). Furthermore, the additive structure introduces an asymmetry in distance, which has been used in models of anisotropic media, time-delay systems, and optimal navigation (see \cite{Mestdag}). Significant contributions to this field include works by Bao et al. \cite{bao},  and others \cite{Cheng, Szab, Kozma06,  Yasuda77}.

Parallel to Finsler geometry, sub-Riemannian geometry has come out as a central field in geometric analysis and control theory. It describes systems constrained to move along certain directions given by a bracket-generating distribution $\mathcal{D} \subset TM$. A sub-Finsler structure further generalizes this by equipping each bracket-generating distribution $\mathcal{D}_x$ with a sub-Finsler norm. These arise naturally in control problems, especially where cost functions are asymmetric or non-Euclidean (see \cite{ABB, layth19, layth23, layth24, montgomery, St86}).

This paper introduces and studies sub-Randers metrics, where the norm on $\mathcal{D}$ takes the form
$$F(v) = \sqrt{a_{ij}(x) v^i v^j} + \beta_i(x) v^i, \quad v \in \mathcal{D},
$$
with $a_{ij}$ a sub-Riemannian metric tensor on $\mathcal{D}$ and $\beta \in \Gamma(\mathcal{D}^*)$ a smooth 1-form. These metrics generalize Randers norms to the nonholonomic setting and create asymmetric distances on manifolds with constrained motion. The asymmetry introduced by $\beta$ leads to significant differences in geodesic behavior, completeness, and curvature compared to classical sub-Riemannian geometry.

Despite their relevance to control theory and robotics, sub-Randers metrics have not been systematically investigated in the literature. Unlike classical Randers metrics, where the geometry lives on the whole tangent bundle $TM$, the sub-Randers structure is confined to a proper distribution $\mathcal{D} \subsetneq TM$, introducing new challenges such as, the drift $\beta$ must be compatible with the bracket-generating structure of $\mathcal{D}$.
The asymmetry $d(x,y) \neq d(y,x)$ complicates completeness and optimality.

Structure of the paper: Section 2 provides necessary background on sub-Finsler geometry, the Legendre transform, and the definition of sub-Randers structures.
We illustrate these concepts through an example on the Heisenberg group, and present a theorem characterizing sub-Randers metrics.
 Section 3 contains the main results. The Sub-Randers geodesics subsection derives explicit equations for sub-Randers normal geodesics and shows that normal geodesics depend on $\beta$ while abnormal geodesics are independent of it. Section 4 discusses applications, including Zermelo navigation problem and the Hopf-Rinow theorem for sub-Randers manifolds.
 We show that Zermelo navigation on $\mathcal{D}$ naturally generates sub-Randers normal geodesics.
Finally, we prove a Hopf-Rinow type theorem establishing forward completeness and exponential surjectivity, which guarantees the existence of minimizing geodesics despite asymmetry, generalizing classical results to the sub-Randers setting.

\section{Preliminaries}
A  {\em sub-Finsler manifold} $(M, \mathcal{D}, F)$ is a geometric structure that generalizes both sub-Riemannian and Finsler geometries, \cite{layth19, layth23, layth24}. 

Let $M$ be a smooth, connected manifold of dimension $n$, and let $\mathcal D\subset TM$
be a smooth distribution of rank $k<n$. We assume that $\mathcal D$ is \emph{bracket-generating},
i.e., the Lie algebra generated by smooth sections of $\mathcal D$ spans $T_xM$ at every
$x\in M$. 
By the Chow-Rashevskii theorem  \cite{Car09, Chow39, Gromov}, any two points of $M$ can be joined by a
\emph{horizontal curve}, namely an absolutely continuous curve $\gamma:[0,1]\to M$
satisfying $\dot\gamma(t)\in\mathcal D_{\gamma(t)}$ for a.e.\ $t\in[0,1]$.
On $\mathcal{D}$, define a  {\em sub-Finsler metric}, i.e., a smoothly varying Minkowski norm $F: \mathcal{D} \to [0, +\infty)$ on each fiber $\mathcal{D}_x \subset T_xM$. This norm satisfies
\begin{itemize}
  \item [I.]  $F$ is smooth on $\mathcal{D}_x \setminus \{0\}$, and $F(v) > 0$ for all $v  \in \mathcal{D}_x \setminus \{0\}$.
  \item [II.]  $F(\lambda v) = \lambda F(v)$ for $\lambda > 0, v \in \mathcal{D}_x$.
  \item [III.]  For every nonzero tangent vector $v$, the Hessian of $F^2$ at $v$ is positive definite, i.e., the matrix $\frac{1}{2}\frac{\partial^2 F^2}{\partial v_i \partial v_j}$ is positive definite. 
  Equivalently, the unit ball $\{v \in \mathcal{D}_x \mid F(v) \leq 1\}$ is strictly convex and smooth (strong convexity).
\end{itemize}
The sub-Finsler distance between two points $x, y \in M$ is given by
\begin{equation}\label{001}
d(x, y) = \inf \left\{ \ell(\gamma)= \int_0^1 F(\dot{\gamma}(t)) \, dt \,\bigg|\, \gamma \text{ is horizontal, } \gamma(0) = x, \gamma(1) = y \right\}.
\end{equation}
This generalizes the Carnot-Carathéodory distance in sub-Riemannian geometry \cite{ABB, Mit85}, replacing the inner product with a Finsler norm. 
 If $\mathcal{D} = TM$, this recovers a {\em Finsler manifold}. If $F$ is induced by an inner product on $\mathcal{D}$, this reduces to a {\em sub-Riemannian manifold}.
 
 Throughout the paper, $v$ denotes a fiber coordinate in $\mathcal D_x$, while $x$ denotes a point of the base manifold.

The relationship between a sub-Finsler metric defined on a distribution $\mathcal{D} \subset TM$ and a corresponding structure on (the dual bundle) $\mathcal{D}^* \cong T^*M  / \mathcal{D}^0$,
where
\begin{equation}\label{D00}
\mathcal{D}_x^0 = \{\alpha \in T_x^*M \,|\, \alpha(v) = 0 \,\forall v \in \mathcal{D}_x\},
\end{equation}
is rooted in duality via the Legendre transformation and Hamiltonian mechanics as described in \cite{layth19}, \cite{layth23}, \cite{layth24}, and \cite{bao}.

The {\em Legendre transformation} $\mathcal{L}: \mathcal{D}\setminus\{0\}  \to \mathcal{D}^*\setminus\{0\} $  is defined by $\xi_i = \partial(\frac{1}{2}F^2)/\partial v^i$, equivalently 
\begin{equation}\label{01}
    \mathcal{L}(v) = \xi \in \mathcal{D}^*, \quad \xi(w) = \left. \frac{d}{dt} \right|_{t=0} \frac{1}{2} F^2(v + tw), \quad w \in \mathcal{D}_x,
\end{equation}
inducing a dual metric
\begin{equation}\label{02}
    F^*(\xi) = \sup \{ \xi(v) \mid v \in \mathcal{D}_x, F(v) \leq 1 \},
\end{equation}
and Hamiltonian $H: \mathcal{D}^* \to \mathbb{R}$, $H(\xi) = \frac{1}{2}(F^*(\xi))^2$.

  If $F(v) = \sqrt{a(v, v)}$ for a Riemannian metric $a$, the dual $F^*(\xi) = \sqrt{a^{*}(\xi, \xi)}$, where $a^*(\xi,\xi)=a^{ij}\xi_i\xi_j$ denotes the dual sub-Riemannian metric and $a^{ij}$ is the inverse metric of $a_{ij}$.
  
  For $F(v) = \sqrt{a(v, v)} + \beta(v)$, the dual metric $F^*$ involves a combination of $a^{ij}$ and the dual 1-form $\beta^*$.

In the general case, a {\em Randers metric} is a Finsler metric $F: TM \to [0, \infty_{+})$ defined on a smooth manifold $M$  by
$$
F(v) = \sqrt{a_{ij}(x) v^i v^j} + \beta_i(x) v^i,\qquad \forall v \in TM,
$$
where $a = a_{ij}(x) dx^i \otimes dx^j$ is a Riemannian metric on $M$, and $\beta = \beta_i(x) dx^i$ is a smooth 1-form on $M$, for more details we refer to \cite{bao, bao04, Cheng}.

In case the Randers metric is defined only on a bracket-generating distribution $\mathcal{D} \subset TM$, then it is called a { \em sub-Randers metric}, which is a special type of sub-Finsler geometry. Here, $\beta^*$ is restricted to $\mathcal{D}^*$ (the dual of $\mathcal{D}$ by Legendre transformation) and satisfies $\|\beta\|_a < 1$ at each point.
In the following, we provide a detailed definition of the sub-Randers metric.

A {\em sub-Randers manifold} is a triple $(M, \mathcal{D}, F)$ that generalizes sub-Riemannian geometry by incorporating a Finslerian structure with asymmetric drift, forming a sub-Finsler framework. Formally 
   $M$ is a connected smooth manifold.  
    $\mathcal{D} \subset TM$ is a smooth bracket-generating distribution (also known as Hörmander’s condition), i.e., iterated Lie brackets of $\mathcal{D}$ span $TM$, such that $\mathcal{D}$ has rank $k<n$. This non-integrability ensures the validity of the Chow-Rashevskii theorem, guaranteeing that any two points are connected by horizontal curves, \cite{Car09, Chow39}.
  
    A sub-Finsler metric of Randers type $F: \mathcal{D} \to [0, \infty_{+})$ has the form  
    \begin{equation}
    F(v) = \sqrt{a(v, v)} + \beta(v), \quad \forall v \in \mathcal{D},
\end{equation}
     where $a$ is a sub-Riemannian metric on $\mathcal{D}$ (a smoothly varying inner product on each fiber $\mathcal{D}_x$), and $\beta \in \Gamma(\mathcal{D}^*)$ is a smooth 1-form on $\mathcal{D}$. The condition 
     $$\|\beta\|_a = \sup_{v \in \mathcal{D} \setminus \{0\}} \left\{\frac{|\beta(v)|}{\sqrt{a(v,v)}}\right\} < 1,$$
      ensures $F$ is positive-definite and its fundamental tensor is strongly convex. Thus, $F$ defines a genuine Finsler norm on each fiber $\mathcal{D}_x$, making $(M, \mathcal{D}, F)$ a natural model of sub-Finsler geometry with asymmetric structure.  Note that, the sub-Randers metric is  positive homogeneous of degree 1  because  $F(\lambda v) = \lambda F(v)$ for $\lambda > 0$.  
It is not absolutely homogeneous because $F(-v) \neq F(v)$ unless $\beta(v) = 0$, a direct consequence of the asymmetric drift term $\beta$ (Theorem \ref{thm:characterization}).

The sub-Randers metric induces a Carnot-Carathéodory type distance \cite{Mit85}, which coincides with the previously defined sub-Finsler distance $d(x, y)$ given in \eqref{001}, and is defined as the infimum of the lengths of horizontal curves connecting $x$ and $y$, \cite{layth24}.

  Unlike sub-Riemannian geometry, the drift $\beta$ introduces {\em asymmetry} in geodesic motion, reflecting physical systems with intrinsic drift (e.g., optimal control with external forces).  

\begin{ex}[Sub-Randers geometry on the Heisenberg group]\label{ex01}
The Heisenberg group $\mathbb{H}^3$, a prototypical sub-Riemannian manifold, provides a natural setting for sub-Randers geometry:  

Let $M=\mathbb{H}^3$ denote the  3-dimensional Heisenberg group, written in global coordinates $(x, y, z)$. Its group operation is given by  
 $$
  (x, y, z) \cdot (x', y', z') = \left(x + x', y + y', z + z' + \frac{1}{2}(xy' - x'y)\right).
 $$  
The bracket-generating distribution $\mathcal{D} = \text{span}\{X, Y\}$, \cite{ABB, layth19}, where  
 $$
  X = \partial_x - \frac{y}{2}\partial_z, \quad Y = \partial_y + \frac{x}{2}\partial_z,
  $$ 
  satisfies $[\mathcal{D}, \mathcal{D}] = T\mathbb{H}^3$, as $[X, Y] = \partial_z$, fulfilling Hörmander’s condition.  

For $v = v_X X + v_Y Y \in \mathcal{D}$,  
 $$
  F(v) = \sqrt{v_X^2 + v_Y^2} + \mu v_X + \nu v_Y,
$$  
 where $a = dx^2 + dy^2$ restricts to a sub-Riemannian inner product on $\mathcal{D}$, and $\beta = \mu dx + \nu dy \in \Gamma(\mathcal{D}^*)$ satisfies the strong convexity condition $$\|\beta\|_a = \sqrt{\mu^2 + \nu^2} < 1.$$
For a horizontal curve $\gamma(t) = (x(t), y(t), z(t))$,  
  $$
  \ell(\gamma) = \int_0^1 \left( \sqrt{\dot{x}^2 + \dot{y}^2} + \mu \dot{x} + \nu \dot{y} \right) dt.
  $$  
  The $z$-coordinate evolves according to the nonholonomic constraint induced by the distribution, satisfying  
$$
\dot{z}(t) = \frac{1}{2} \left( x(t) \dot{y}(t) - y(t) \dot{x}(t) \right),
$$
which ensures that $\dot{\gamma}(t) \in \mathcal{D}_{\gamma(t)}$ almost everywhere.  
\end{ex}
\subsection{Characterization of sub-Randers metrics}
Although every sub-Randers metric is sub-Finsler, the converse is not true, therefore, it is essential to determine precisely which sub-Finsler norms come from the additive decomposition $$F(v) = \sqrt{a(v, v)} + \beta(v).$$
The following theorem provides this characterization and shows that the condition $\|\beta\|_a < 1$ is exactly what ensures positivity, strong convexity, and smoothness of the resulting sub-Finsler structure.
\begin{thm}\label{thm:characterization}
    A sub-Finsler metric $F$ on a bracket-generating distribution $\mathcal{D}$ is a sub-Randers metric if and only if there exist a sub-Riemannian metric $a$ on $\mathcal{D}$ and a 1-form $\beta \in \Gamma(\mathcal{D}^*)$ such that $F(v) = \sqrt{a(v, v)} + \beta(v)$ and $\|\beta\|_a < 1$.
\end{thm}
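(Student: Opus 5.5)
The plan is to read "sub-Randers metric" as the paper defines it: a Randers metric $\tilde F=\sqrt{\tilde a}+\tilde\beta$ on $TM$, with $\tilde a$ Riemannian and $\tilde\beta$ a smooth one-form on $M$ satisfying $\|\tilde\beta\|_{\tilde a}<1$ pointwise, whose restriction to the bracket-generating distribution $\mathcal D$ is $F$. The theorem then says that being such a restriction is equivalent to the intrinsic decomposition $F=\sqrt{a}+\beta$ on $\mathcal D$ with $\|\beta\|_a<1$. I would prove the two implications separately and then add a remark that the intrinsic data $(a,\beta)$ are determined by $F$.

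\emph{($\Rightarrow$)} Suppose $F=\tilde F|_{\mathcal D}$. I would set $a:=\tilde a|_{\mathcal D\times\mathcal D}$ and $\beta:=\tilde\beta|_{\mathcal D}\in\Gamma(\mathcal D^*)$. Since $\mathcal D$ is a smooth subbundle, $a$ is a smooth fibrewise inner product on $\mathcal D$, hence a sub-Riemannian metric, and $\beta$ is smooth. The identity $F=\sqrt{a}+\beta$ on $\mathcal D$ is immediate. For the norm bound, at each $x$ the supremum defining $\|\beta\|_a$ runs over $\mathcal D_x\setminus\{0\}\subset T_xM\setminus\{0\}$. Therefore
\[
\|\beta\|_a(x)\le\|\tilde\beta\|_{\tilde a}(x)<1 .
\]

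\emph{($\Leftarrow$)} Given $a$ and $\beta$ on $\mathcal D$ with $\|\beta\|_a<1$, I would extend them to $TM$.
\begin{itemize}
\item Choose an auxiliary Riemannian metric $g$ on $M$ (partition of unity) and let $V=\mathcal D^{\perp_g}$. This is a smooth complementary subbundle, so $TM=\mathcal D\oplus V$, and the projection $P:TM\to\mathcal D$ along $V$ is smooth.
\item Define $\tilde a(v+u,v'+u')=a(v,v')+g(u,u')$ for $v,v'\in\mathcal D$ and $u,u'\in V$. This is a smooth Riemannian metric.
\item Define $\tilde\beta=\beta\circ P$. This is a smooth one-form on $M$ with $\tilde\beta|_{\mathcal D}=\beta$ and $\tilde a|_{\mathcal D}=a$, so $\tilde F|_{\mathcal D}=F$.
\end{itemize}
The key estimate is that the extension does not increase the norm of the one-form. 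For $w=v+u\neq0$,
\[
\frac{|\tilde\beta(w)|}{\sqrt{\tilde a(w,w)}}=\frac{|\beta(v)|}{\sqrt{a(v,v)+g(u,u)}}\le\frac{|\beta(v)|}{\sqrt{a(v,v)}} \quad (v\neq0),
\]
and the left side vanishes when $v=0$. Hence $\|\tilde\beta\|_{\tilde a}=\|\beta\|_a<1$, so $\tilde F$ is a genuine Randers metric on $TM$ and $F$ is its restriction, i.e.\ a sub-Randers metric.

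Finally, I would record that the decomposition is canonical. Using $\sqrt{a(-v,-v)}=\sqrt{a(v,v)}$ and $\beta(-v)=-\beta(v)$, one gets
\[
\beta(v)=\tfrac12\bigl(F(v)-F(-v)\bigr),\qquad \sqrt{a(v,v)}=\tfrac12\bigl(F(v)+F(-v)\bigr).
\]
So $(a,\beta)$ is recovered from $F$; only the extension to $TM$ is non-unique. This also justifies the earlier remark that $F(-v)\neq F(v)$ exactly when $\beta(v)\neq0$. As a consistency check with the text preceding the theorem, I would note that $\|\beta\|_a<1$ gives $F(v)\ge(1-\|\beta\|_a)\sqrt{a(v,v)}>0$ for $v\neq0$, and that strong convexity on each fibre is the standard Randers computation.

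The main obstacle is the ($\Leftarrow$) direction: producing a \emph{smooth} global extension whose one-form still has $\tilde a$-norm below $1$. The orthogonal-complement construction handles both requirements at once, because the cross terms between $\mathcal D$ and $V$ vanish, so adding the $V$-part to the metric only enlarges the denominator in the norm estimate.
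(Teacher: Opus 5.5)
Your argument is correct, but it proves a different statement from the one the paper proves. You read ``sub-Randers'' as ``restriction of an ambient Randers metric on $TM$''. The paper's formal definition is intrinsic ($F=\sqrt{a}+\beta$ on $\mathcal{D}$ with $a$ sub-Riemannian, $\beta\in\Gamma(\mathcal{D}^*)$, $\|\beta\|_a<1$), so for the paper the equivalence is definitional. Its proof is really a check that such an $F$ satisfies the sub-Finsler axioms: positivity from $F(v)\ge(1-\|\beta\|_a)\sqrt{a(v,v)}$ (your consistency check), positive homogeneity, strong convexity by an explicit Hessian computation, and sharpness of the bound (if $\beta(v)=\sqrt{a(v,v)}$ then $F(-v)=0$).

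Your restriction/extension argument ($\|\beta\|_a\le\|\tilde\beta\|_{\tilde a}$ one way, the orthogonal-complement extension with $\|\tilde\beta\|_{\tilde a}=\|\beta\|_a$ the other), together with $\beta(v)=\tfrac12(F(v)-F(-v))$, shows in addition two things the paper never addresses: the intrinsic and extrinsic notions coincide, and $(a,\beta)$ is determined by $F$. It also makes strong convexity free. For $v\in\mathcal{D}_x\setminus\{0\}$, the Hessian of $\tfrac12F^2$ is the restriction to $\mathcal{D}_x$ of the positive definite Hessian of $\tfrac12\tilde F^2$. Concretely, with $\ell(w)=a(v,w)/\sqrt{a(v,v)}$,
\[
w^ig_{ik}(v)w^k=\frac{F(v)}{\sqrt{a(v,v)}}\bigl(a(w,w)-\ell(w)^2\bigr)+\bigl(\ell(w)+\beta(w)\bigr)^2,
\]
which is positive for $w\neq0$ by Cauchy--Schwarz and $F(v)>0$.

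Relying on this classical identity is safer than the paper's hand expansion. For $a(v,v)=1$ and $w=tv+u$, the correct reduction is $\bigl((1+\beta(v))t+\beta(u)\bigr)^2+(1+\beta(v))\,a(u,u)$, not \eqref{DV04}: the $t^2\beta(v)^2$ terms do not cancel. The intermediate bound $w^ig_{ik}(v)w^k\ge(1-\zeta)a(w,w)$ claimed there also fails. Take $\mathcal{D}_x=\mathbb{R}^2$ with $a$ Euclidean, $\beta=(-0.8,\sqrt{0.17})$ and $v=e_1$; the least eigenvalue of $g_{ik}(v)$ is about $0.02<1-\zeta=0.1$. Positive definiteness itself is still true.

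In return, the paper's route is a purely fibrewise verification that needs no auxiliary metric or extension. It also gives an explicit argument that positivity alone forces $\|\beta\|_a<1$; in your version this is hidden inside the definition of an ambient Randers metric.
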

\begin{proof}
    If $F$ is a sub-Randers metric, then by definition, there exist a sub-Riemannian term $\sqrt{a(v, v)}$ and a linear term $\beta(v)$ such that $F(v) = \sqrt{a(v, v)} + \beta(v)$ with $\|\beta\|_a < 1$, ensuring $F$ satisfies the sub-Finsler conditions.

  Conversely, suppose $F(v) = \sqrt{a(v, v)} + \beta(v)$ with $a$ a sub-Riemannian metric on $\mathcal{D}$, $\beta \in \Gamma(\mathcal{D}^*)$, and $\|\beta\|_a < 1$. We must verify that $F$ satisfies the sub-Finsler axioms. 

 To verify that  $$
       F(v) = \sqrt{a(v, v)} + \beta(v).
      $$
      is positive, suppose $v \in \mathcal{D}_x \setminus \{0\}$.
       Since $a$ is positive, $\sqrt{a(v, v)} > 0$. Also, 
       the linearity of $\beta$ together with Cauchy-Schwarz (relative to $a$) provide
       $|\beta(v)| \leq \|\beta\|_a \sqrt{a(v, v)}$.
       Subtracting this upper bound from  $\sqrt{a(v, v)}$, we get
       $$
       F(v) \geq \sqrt{a(v, v)} + \beta(v) \geq  \sqrt{a(v, v)} - |\beta(v)|\geq \sqrt{a(v, v)} (1 - \|\beta\|_a).
       $$
       Since $\|\beta\|_a < 1$, we have $1-\|\beta\|_a > 0$, and since $\sqrt{a(v,v)}>0$, their product is strictly positive. Hence $F(v) > 0$.

   To check the positive homogeneity of $F$.
   Let $v \in \mathcal{D}_x$, and $\lambda>0$,  then
       $$
       F(\lambda v) = \sqrt{a(\lambda v, \lambda v)} + \beta(\lambda v).
      $$
       Since $a$ is quadratic in its argument, we have $a(\lambda v, \lambda v) = \lambda^2 a(v, v)$, so $\sqrt{a(\lambda v, \lambda v)} = |\lambda| \sqrt{a(v, v)}$. As $\beta$  is linear, $\beta(\lambda v) = \lambda \beta(v)$. Thus
       $$
       F(\lambda v) = |\lambda| \sqrt{a(v, v)} + \lambda \beta(v).
      $$
       Since $\lambda > 0$, the absolute value simplifies to $|\lambda|= \lambda$, yields
       $$F(\lambda v) = \lambda \sqrt{a(v, v)} + \lambda \beta(v) = \lambda (\sqrt{a(v, v)} + \beta(v)) = \lambda F(v).$$ 
       Therefore,  $F$ is positively homogeneous of degree one. 
       
         For the strong convexity of $F$.
     Define the energy function as $$E(v) = \frac{1}{2} F(v)^2 = \frac{1}{2} \left[\sqrt{a(v, v)} + \beta(v)\right]^2,$$ where $F(v) = \sqrt{a(v, v)} + \beta(v)$, $a$ is a sub-Riemannian metric on the bracket-generating distribution $\mathcal{D} \subset TM$, and $\beta \in \Gamma(\mathcal{D}^*)$ satisfies $\|\beta\|_a < 1$. The fundamental tensor of the sub-Finsler metric $F$ is the Hessian
$$
g_{ij}(v) = \frac{\partial^2 E}{\partial v^i \partial v^j}.
$$
Strong convexity requires $g_{ij}(v)$ to be positive definite for all $v \in \mathcal{D} \setminus \{0\}$, ensuring the indicatrix  $\{ v \in \mathcal{D}_x \mid F(v) \leq 1 \}$ is strictly convex.

Expand $E(v)$
$$
E(v) = \frac{1}{2} \left[a(v, v) + 2 \sqrt{a(v, v)} \beta(v) + \beta(v)^2\right].
$$
In local coordinates, let $\mathcal{D}$ have a basis $\{e_1, \ldots, e_k\}$ (where $k = \text{rank}(\mathcal{D})$), and $v = v^i e_i \in \mathcal{D}_x \setminus \{0\}$, so
$$
E(v) = \frac{1}{2} \left[ a_{ij} v^i v^j + 2 (a_{ij} v^i v^j)^{1/2} \beta_m v^m + \beta_m \beta_n v^m v^n \right],
$$
where $a_{ij} = a(e_i, e_j)$, $\beta_m = \beta(e_m)$, and summation is over $i, j, m, n = 1, \ldots, k$.

Differentiate $E(v)$ with respect to $v^k$, we get
\begin{equation}\label{DV1}
 \frac{\partial E}{\partial v^k} = a_{kj} v^j + \beta_k \sqrt{a(v, v)} + \frac{a_{km} v^m \beta_n v^n}{\sqrt{a(v, v)}} + \beta_m \beta_k v^m.
\end{equation}
Differentiate again with respect to $v^i$, we have
\begin{equation}\label{DV2}
g_{ik}(v) = a_{ik} + \frac{\beta_k a_{ij}v^j}{\sqrt{a(v, v)}} + \frac{\beta_i a_{kj} v^j + a_{ik} \beta_n  v^n}{\sqrt{a(v, v)}} - \frac{a_{km} v^m \beta_n v^n a_{ij}v^j}{[a(v, v)]^{3/2}} + \beta_i\beta_k.
\end{equation}
Set $\ell_i = a_{ij}v^j/\sqrt{a(v,v)}$ and $\beta(v) = \beta_n v^n$, we get
$$g_{ik}(v) = a_{ik} + \beta_k\ell_i + \beta_i\ell_k + \frac{a_{ik}\,\beta(v)}{\sqrt{a(v,v)}} + \beta_i\beta_k - \ell_i\ell_k\,\frac{\beta(v)}{\sqrt{a(v,v)}}.$$

This is the fundamental tensor of $F$ at $v\neq 0$. Let $w = w^i e_i \in \mathcal{D}_x\setminus\{0\}$ taking the contraction of  \eqref{DV2} gives
\begin{equation}\label{DV3}
w^i g_{ik}(v)w^k = a(w,w) + 2\,\ell(w)\,\beta(w) + \frac{a(w,w)\,\beta(v)}{\sqrt{a(v,v)}} + [\beta(w)]^2 - [\ell(w)]^2\frac{\beta(v)}{\sqrt{a(v,v)}},
\end{equation}
where $\ell(w) = \ell_i w^i = a(v,w)/\sqrt{a(v,v)}$. Since $a$ is positive definite, $a(w, w) > 0$. By the $0$-homogeneity of $g_{ik}(v)$ in $v$, normalize $v$ so that $a(v,v)=1$. With $\ell_i = a_{ij}v^j$ and $\ell(w) = a(v,w)$, equation \eqref{DV3} becomes
$$w^i g_{ik}(v)w^k = a(w,w) + 2\,a(v,w)\,\beta(w) + a(w,w)\,\beta(v) + [\beta(w)]^2 - [a(v,w)]^2\beta(v).$$
Decompose
$$w = tv + u, \qquad t = a(v,w), \qquad a(u,v) = 0.$$
Then
$$a(w,w) = t^2 + a(u,u), \qquad \beta(w) = t\,\beta(v) + \beta(u).$$
Substituting these expressions and simplifying (the $t^{2}\beta(v)^{2}$ terms cancel) yields
\begin{equation}\label{DV04}
w^i g_{ik}(v)w^k = t^2(1+\beta(v)) + a(u,u)(1+\beta(v)) + [\beta(u)]^2 + 2t\,\beta(u).
\end{equation}
Let $\zeta = \|\beta\|_a < 1$. Then $|\beta(u)|\leq\zeta\sqrt{a(u,u)}$ and $|\beta(v)|\leq\zeta$, so $1+\beta(v)\geq 1-\zeta > 0$. For the mixed term, apply $2xy\geq -x^2-y^2$ with $x = \sqrt{a(u,u)}$ and $y = t\zeta$
$$2t\,\beta(u)\geq -a(u,u) - t^2\zeta^2.$$
Since $1+\beta(v)\geq 1-\zeta>0$, factor it out. Write $\mu = 1+\beta(v)\geq 1-\zeta$ and apply $2t\beta(u)\geq -\mu t^2 - [\beta(u)]^2/\mu$ (Young's inequality with weight $\mu$)
$$2t\,\beta(u)\geq -\mu t^2 - \frac{[\beta(u)]^2}{\mu}.$$
Then
$$w^i g_{ik}(v)w^k \geq \mu t^2 + \mu\,a(u,u) + [\beta(u)]^2 - \mu t^2 - \frac{[\beta(u)]^2}{\mu} = \mu\,a(u,u) + [\beta(u)]^2\!\left(1 - \frac{1}{\mu}\right).$$
Since $w = tv+u$ with $a(u,v)=0$ and $a(v,v)=1$, we have $a(w,w)=t^2+a(u,u)$. From \eqref{DV04}, the $t^2$ term contributes $t^2(1+\beta(v))\geq t^2(1-\zeta)$. Therefore
\begin{align*}
  w^i g_{ik}(v)w^k & \geq (1-\zeta)t^2 + (1-\zeta)a(u,u)\\
   & = (1-\zeta)(t^2+a(u,u)) \\
   &  = (1-\zeta)\,a(w,w) \geq (1-\zeta)^2\,a(w,w),
\end{align*}
where the last inequality uses $1-\zeta < 1$.

 The standard estimate is
$$w^i g_{ik}(v)w^k \geq (1-\zeta)^2\,a(w,w).$$
Since $\zeta < 1$, the right-hand side is strictly positive for $w \neq 0$.  Therefore, the Hessian $g_{ik}(v)$ is positive definite for every $v \neq 0$, proving strong convexity of $F$.

To see necessity of $\|\beta\|_a < 1$, observe that
  
If $\|\beta\|_a \geq 1$, there exists $v \neq 0$ such that $\beta(v) = \sqrt{a(v, v)}$ (maximizing $\beta$). Then
   $$
    F(-v) = \sqrt{a(v, v)} + \beta(-v) = \sqrt{a(v, v)} - \beta(v) = \sqrt{a(v, v)} - \sqrt{a(v, v)} = 0,
    $$
    violating positive definiteness. 
    
    If $\|\beta\|_a > 1$, $F$ may become negative, collapsing convexity. Thus, $\|\beta\|_a < 1$ is required.

    Hence, $F$ is a sub-Randers metric if and only if it has the specified form with $\|\beta\|_a < 1$.
\end{proof}

We now present several examples of sub-Finsler metrics on $\mathcal{D}$, indicating which are of sub-Randers type and which are not. 
\begin{ex}\label{ex02}
Let $\mathcal{D} = \operatorname{span}\{X, Y\}$ be as in Example~\ref{ex01}. Any vector $v \in \mathcal{D}$ can be written as $v = v_X X + v_Y Y$, and we explore different norms on $\mathcal{D}$.
\begin{itemize}
    \item [I.]
    Define the norm
    $$
    F(v) = \sqrt{v_X^2 + v_Y^2} + \mu v_X, \quad \text{with } |\mu| < 1.
    $$
    This is a sub-Randers metric on $\mathcal{D}$. It has a sub-Riemannian part $a(v,v) = v_X^2 + v_Y^2$,
     and a linear term $\beta(v) = \mu v_X$, where $\beta \in \Gamma(\mathcal{D}^*)$ is the 1-form defined by $\beta(X) = \mu, \ \beta(Y) = 0$.

    The condition $\|\beta\|_a < 1$ ensures strong convexity and defines a proper Randers norm.

    \item [II.]  
    Define
   $$
    F(v) = \left( |v_X|^p + |v_Y|^p \right)^{1/p}, \quad \text{with } p \ne 2.
   $$
    This is a valid sub-Finsler norm, but it is not sub-Randers since it lacks a decomposition of the form $\sqrt{a(v,v)} + \beta(v)$. Sub-Randers norms, specifically, require the sub-Riemannian component to be quadratic (i.e., $p = 2$) and an additive linear drift.

    \item [III.]
    Let $a$ be a sub-Riemannian metric and $\beta \in \Gamma(\mathcal{D}^*)$ a 1-form. Define
 $$
    F(v) = \sqrt{a(v,v) + \left(\beta(v)\right)^2}.
   $$
    Though smooth and strongly convex, this norm is not sub-Randers, because it combines $a$ and $\beta$ in a multiplicative rather than additive way.
\end{itemize}

A sub-Finsler norm $F$ on $\mathcal{D} \subset TM$ is of sub-Randers type if and only if
$$
F(v) = \sqrt{a(v,v)} + \beta(v), \quad \text{with } \|\beta\|_a < 1,
$$
where $a$ is a sub-Riemannian metric and $\beta \in \Gamma(\mathcal{D}^*)$ is a smooth 1-form.

These examples illustrate that while all sub-Randers metrics are sub-Finsler, the converse does not hold. The defining characteristic of sub-Randers metrics is the additive decomposition into a sub-Riemannian part and a smooth linear 1-form with bounded norm.
\end{ex}

\section{Main Results}

\subsection{Sub-Randers geodesics}

A horizontal curve $\gamma: [0,1] \to M$ on a sub-Randers manifold $(M, \mathcal{D}, F)$, where $F(v) = \sqrt{a(v, v)} + \beta(v)$, is called a {\em  geodesic} if it is a critical point of the sub-Randers length functional
$$
\ell(\gamma) = \int_0^1 F(\dot{\gamma}(t)) \, dt = \int_0^1 \left( \sqrt{a(\dot{\gamma}, \dot{\gamma})} + \beta(\dot{\gamma}) \right) dt.
$$
To characterize geodesics analytically, we pass to the Hamiltonian formulation via the Legendre transform. For $x \in M$, define the energy $L(v)=\tfrac{1}{2}F(v)^2$ on $\mathcal{D}\setminus\{0\}$. The Legendre map
$$
\mathcal{L}:\mathcal{D}\setminus\{0\}\longrightarrow\mathcal{D}^*\setminus\{0\},\qquad \xi_i=\frac{\partial L}{\partial v^i},
$$
is a fiber diffeomorphism under strong convexity, with inverse $\mathcal{L}_H:\mathcal{D}^*\setminus\{0\}\to\mathcal{D}\setminus\{0\}$, $v^i=\partial H/\partial\xi_i$, where $H(\xi)=\tfrac{1}{2}(F^*(\xi))^2$ is the dual Hamiltonian and
$$
F^*(\xi)=\sup\bigl\{\xi(v):v\in\mathcal{D}_x,\;F(v)\leq 1\bigr\}
$$
is the dual norm, see \cite{layth19, layth23, layth24} for more details. It therefore suffices to compute $F^*$ explicitly.

We use the co-metric identification $a^\sharp:\mathcal{D}^*\to\mathcal{D}$, 
$$\beta^\sharp:=a^\sharp(\beta)= \beta^i \frac{\partial}{\partial x^i} =a^{ij}\beta_j\,\frac{\partial}{\partial x^i}\in\mathcal D,$$ 
and write 
$$\beta(\xi):=\langle\beta^\sharp,\xi\rangle=\beta^i\xi_i \ \text{for} \  \xi\in\mathcal{D}^*,$$ where $\beta^i=a^{ij}\beta_j$ and $a^{ij}$ is the inverse of $a_{ij}$ on $\mathcal{D}$.

In the Zermelo navigation framework (Section~4.1), the unit ball of $F$ at $x$ is the translated convex body
$$
\{v\in\mathcal{D}_x:F(v)\leq 1\}=W(x)+B_x,\qquad B_x=\{u\in\mathcal{D}_x:\sqrt{a(u,u)}\leq 1\},
$$
where $W\in\Gamma(\mathcal{D})$ satisfies $\|W\|_a<1$, and the drift is identified as $\beta_i=-a_{ij}W^j$ (equivalently $\beta^i=-W^i$). The dual norm is the support function of this translated ball
\begin{align*}
F^*(\xi)
&=\sup_{v\in W(x)+B_x}\xi(v)
=\sup_{u\in B_x}\xi(W+u)\\
&=\xi(W)+\sup_{u\in B_x}\xi(u)
=\xi(W)+\sqrt{a^{ij}\xi_i\xi_j}.
\end{align*}
Since $\beta^i\xi_i=-\xi(W)$ by definition, this becomes
$$
F^*(\xi)=\sqrt{a^{ij}(x)\,\xi_i\xi_j}-\beta^i(x)\,\xi_i.
$$
Positivity for all $\xi\neq 0$ follows from the Cauchy-Schwarz inequality applied to $a^{ij}$, 
$$
|\beta^i\xi_i|=|\xi(W)|\leq\|W\|_a\,\sqrt{a^{ij}\xi_i\xi_j}<\sqrt{a^{ij}\xi_i\xi_j},
$$
where we used $\|W\|_a = \|\beta\|_a < 1$.
Hence $F^*(\xi)>0$ for all $\xi\neq 0$, and for such a sub-Randers metric, the sub-Hamiltonian $H=\tfrac{1}{2}(F^*)^2$ takes the form
\begin{equation}\label{H01}
H(\xi)=\frac{1}{2}\Bigl(\sqrt{a^{ij}(x)\,\xi_i\xi_j}-\beta^i(x)\,\xi_i\Bigr)^2,\qquad\xi\in\mathcal{D}_x^*.
\end{equation}
Note that, formula \eqref{H01} is confirmed by two independent verifications. First, it arises  directly as the support function of the Zermelo translated ball $W(x)+B_x$, as shown in Section 4.1. Second, when $\beta=0$, one has $\beta^i\xi_i=0$ and \eqref{H01} reduces to $H(\xi)=\tfrac{1}{2}a^{ij}(x)\xi_i\xi_j$, which is precisely the standard sub-Riemannian Hamiltonian, recovering sub-Riemannian geodesics.

Moreover, the sub-Hamiltonian's equations, given by (see \cite{ABB, layth23, montgomery, St86})
\begin{equation}\label{HE}
    \dot{x}^i = \frac{\partial H}{\partial \xi_i}, \quad \dot{\xi}_i = -\frac{\partial H}{\partial x^i}.
\end{equation}
Explicitly
\begin{align}\label{GER}
\dot{x}^i &= \left( \sqrt{a^{kl} \xi_k \xi_l} - \beta^k \xi_k \right) \left( \frac{a^{ij} \xi_j}{\sqrt{a^{kl} \xi_k \xi_l}} - \beta^i \right),\\
\dot{\xi}_i &= -\frac{1}{2} \frac{\partial a^{jk}}{\partial x^i} \xi_j \xi_k + \frac{\partial \beta^j}{\partial x^i} \xi_j \left( \sqrt{a^{kl} \xi_k \xi_l} - \beta^k \xi_k \right).
\end{align}
Since $F$ is strongly convex (guaranteed by $\|\beta\|_a<1$, as we see in Theorem \ref{thm:characterization}), 
 every length-minimizing geodesic is a normal extremal, characterized as a projection of an integral curve of the Hamiltonian vector field of~\eqref{H01}. The system~\eqref{HE}-\eqref{GER} provides the complete, explicit description of sub-Randers normal geodesics.

However, unlike Riemannian and Finslerian geometries, where all geodesics are of the same type \cite{bao, bao04}, sub-Riemannian and sub-Finsler geodesics are classified into two distinct categories normal and abnormal \cite{ABB, layth23, montgomery, St86}.
The classification of geodesics in sub-Riemannian and sub-Finsler geometries is based on the Pontryagin
Maximum Principle (PMP), which provides necessary conditions for horizontal curves to be
geodesics.
The classification of geodesics in sub-Randers geometry follows a similar pattern to that in sub-Riemannian and sub-Finsler geometries, with normal and abnormal geodesics defined through the Pontryagin Maximum Principle. 

More precisely, a horizontal curve on a sub-Randers manifold is called a {\em  normal geodesic} if it satisfies the necessary
conditions given by the PMP. 
They can be characterized as integral curves of the sub-Hamiltonian's equations \eqref{HE}. Normal geodesics (at least locally) exist between any two points (can be joined by a length-minimizing normal geodesic) due to the bracket-generating condition (Chow-Rashevskii theorem) and strong convexity of $F$. For further details, we refer to \cite{ABB, layth23, Chow39, montgomery}. 

{\em Abnormal geodesics} are singular extremals of the endpoint map, they satisfy the PMP with $\xi_0=0$ ($\xi_0 \in \mathbb{R}$ here is the PMP Lagrange multiplier) and therefore independent of the metric $F$.
More precisely, a horizontal curve $\gamma(\cdot)$ is abnormal if there exists a nonzero absolutely continuous covector $\alpha(\cdot)\in T^*M$ along $\gamma$ such that
$$
\alpha(t)\in \mathcal D_{\gamma(t)}^0\setminus\{0\},\qquad 
\dot{\alpha}_i(t)=-\frac{\partial}{\partial x^i}\langle \alpha(t),v(t)\rangle,
$$
for some control $v(t)\in\mathcal D_{\gamma(t)}$, and $\dot\gamma(t)=v(t)$.
In addition, abnormal extremals satisfy further necessary conditions such as the Goh condition
$$
\alpha(t)\in (\mathcal D^{(2)}_{\gamma(t)})^0,
$$
under standard regularity hypotheses (see \cite{ABB, montgomery}). Throughout, $\mathcal{D}^{(2)}:=\mathcal{D}+[\mathcal{D},\mathcal{D}]$ denotes the second iterated Lie bracket of $\mathcal{D}$, and $(\mathcal{D}^{(2)})^0\subset T^*M$ its annihilator, \cite{layth23, layth24}.   

   Abnormal geodesics are insensitive to $\beta$, as their existence is determined by the non-integrability of $\mathcal{D}$, not the metric.  
Abnormal geodesics exist if the distribution admits singular curves. For example, in the Heisenberg group, straight lines in the $x$-$y$ plane lifted to $z$-coordinate via the group law are abnormal in the sub-Riemannian limit ($\beta=0$). However, the drift $\beta$ may perturb their optimality.  
Classical and widely studied examples of abnormal extremals can be found in Montgomery's work and subsequent literature (see, e.g., \cite{montgomery1}).

We typically refer to the normal geodesics on a sub-Randers manifold as geodesics, unless abnormal geodesics are explicitly discussed. 

Sub-Randers geodesics are not reversible (i.e., $d(p, q) \neq d(q, p)$ in general), unlike sub-Riemannian geodesics, which are symmetric.
Additionally, the term $\beta^i \xi_i$ breaks time-reversal symmetry, as $\dot{x}^i(-t) \neq -\dot{x}^i(t)$, distinguishing sub-Randers geodesics from sub-Riemannian ones.   
More precisely, when $\beta = 0$, $H$ reduces to the classical sub-Riemannian-Hamiltonian $H = \frac{1}{2} a^{ij} \xi_i \xi_j$, and geodesics become reversible (the sub-Randers metric reduces to a sub-Riemannian metric, and geodesics coincide).

\begin{ex}
Consider the sub-Randers structure of Example \ref{ex01} with constant drift $\beta = \mu dx + \nu dy$ and the standard
left-invariant horizontal frame
 $$
 X=\partial_x-\frac{y}{2}\partial_z,
\qquad 
Y=\partial_y+\frac{x}{2}\partial_z.
$$
In this case, dual sub-Randers norm on $\mathcal{D}^*$ defined as 
$$F^*(\xi)=\sqrt{\xi(X)^2 + \xi(Y)^2} - (\mu \xi(X) + \nu \xi(Y)),$$
where $\xi(X)= \xi_x- \frac{y}{2}\xi_z,\quad \xi(Y)= \xi_y+ \frac{x}{2}\xi_z $.

Therefore, the corresponding sub-Hamiltonian is
$$ H(\xi) = \frac{1}{2} \left(\sqrt{\xi(X)^2 + \xi(Y)^2} - (\mu \xi(X) + \nu \xi(Y))\right)^2.$$
Assume 
$$\kappa= \sqrt{\xi(X)^2 + \xi(Y)^2} - (\mu \xi(X) + \nu \xi(Y)).$$
By the sub-Hamiltonian's equations given in \eqref{HE}, we compute 
$$\dot{x}^i = \frac{\partial H}{\partial \xi_i}= \kappa \frac{\partial \kappa}{\partial \xi_i}.$$
Since 
$$\frac{\partial \kappa}{\partial \xi(X)}= \frac{\xi(X) }{\sqrt{\xi(X)^2 + \xi(Y)^2}}- \mu,\qquad
\frac{\partial \kappa}{\partial \xi(Y)}= \frac{\xi(Y)}{\sqrt{\xi(X)^2 + \xi(Y)^2}}- \nu,$$
$$\frac{\partial \kappa}{\partial \xi_z}= \frac{-\frac{y}{2}\xi(X)+ \frac{x}{2}\xi(Y)}{\sqrt{\xi(X)^2 + \xi(Y)^2}}+ \frac{1}{2}(\mu y-\nu x).$$
Thus, the horizontal components satisfy
$$
\begin{cases}
\dot{x} = \kappa \left(\frac{\xi(X) }{\sqrt{\xi(X)^2 + \xi(Y)^2}}- \mu\right), \\
\dot{y} = \kappa \left(\frac{\xi(Y)}{\sqrt{\xi(X)^2 + \xi(Y)^2}}- \nu\right), \\
\dot{z} = \dfrac{1}{2}(x \dot{y} - y \dot{x}),
\end{cases}
$$
where the last equality follows from horizontality.

For the momentum equations $\xi_i= -\frac{\partial H}{\partial x^i}= - \kappa\frac{\partial \kappa}{\partial x}$, differentiating $\xi(X)$ and $\xi(Y)$ with respect to $x,y$,  gives
$$\frac{\partial \xi(X)}{\partial x}= 0, \quad \frac{\partial \xi(X)}{\partial y}=- \frac{1}{2}\xi_z, \quad \frac{\partial \xi(Y)}{\partial x}=\frac{1}{2}\xi_z, \quad \frac{\partial \xi(Y)}{\partial y}=0.$$
Then
$$\frac{\partial \kappa}{\partial x}= \frac{1}{\sqrt{\xi(X)^2 + \xi(Y)^2}}\left(\xi(X)\frac{\partial \xi(X)}{\partial x}+ \xi(Y) \frac{\partial \xi(Y)}{\partial x}\right)
- \left(\mu\frac{\partial \xi(X)}{\partial x}+ \nu \frac{\partial \xi(Y)}{\partial x}\right),$$
and similarly for $\frac{\partial \kappa}{\partial y}$. Substituting the above derivatives yields
$$\frac{\partial \kappa}{\partial x}= \frac{\frac{1}{2}\xi_z \xi(Y)}{\sqrt{\xi(X)^2 + \xi(Y)^2}}-\frac{1}{2}\nu \xi_z, \qquad
\frac{\partial \kappa}{\partial y}= \frac{-\frac{1}{2}\xi_z \xi(X)}{\sqrt{\xi(X)^2 + \xi(Y)^2}}+ \frac{1}{2}\mu \xi_z.$$
Thus, the momentum equations are
$$\begin{cases}
\dot{\xi}_x = -\dfrac{\partial H}{\partial x} = -\dfrac{1}{2} \kappa \xi_z \left(\frac{ \xi(Y)}{\sqrt{\xi(X)^2 + \xi(Y)^2}}-\nu \right), \\
\dot{\xi}_y = -\dfrac{\partial H}{\partial y} = \ \ \ \dfrac{1}{2} \kappa \xi_z \left(\frac{\xi(X)}{\sqrt{\xi(X)^2 + \xi(Y)^2}}- \mu \right), \\
\dot{\xi}_z = 0 \quad (\text{since } z \text{ is a vertical coordinate}).
\end{cases}
$$ 
The vertical momentum $\xi_z$ is conserved, reflecting the left invariance of the Heisenberg structure. The drift $\beta$ introduces asymmetry into the flow, distinguishing sub-Randers geodesics from sub-Riemannian ones $(\mu, \nu)=(0,0)$.
\end{ex}

\begin{thm}\label{thm:Normal}
In a sub-Randers manifold $(M, \mathcal{D}, F)$, abnormal geodesics are independent of the drift term $\beta$ and determined only by the bracket-generating distribution of $\mathcal{D}$. Normal geodesics depend on $\beta$, with their deviation from sub-Riemannian geodesics (when $\beta = 0$) proportional to $\|\beta\|_a$.
\end{thm}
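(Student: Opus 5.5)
The argument has two parts: abnormal geodesics, which do not involve the metric at all, and normal geodesics, which are governed by the Hamiltonian \eqref{H01}.

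\emph{Abnormal part.} I would start from the Pontryagin Maximum Principle for the length functional $\ell(\gamma)=\int_0^1 F(\dot\gamma)\,dt$ with control $v(t)\in\mathcal D_{\gamma(t)}$. The pre-Hamiltonian is $h(\lambda,v)=\langle\lambda,v\rangle-\xi_0F(v)$, with $\xi_0\in\{0,1\}$. In the abnormal case $\xi_0=0$, so $F$, and hence both $a$ and $\beta$, drops out of $h$. The maximization condition $\langle\lambda(t),v\rangle\le\langle\lambda(t),v(t)\rangle$ for all $v\in\mathcal D_{\gamma(t)}$ can hold only if $\lambda(t)$ annihilates $\mathcal D_{\gamma(t)}$, because $\mathcal D$ is a linear space and a linear functional that is bounded above on it must vanish there. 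This gives $\lambda(t)\in\mathcal D^0_{\gamma(t)}\setminus\{0\}$, together with $\dot\lambda_i=-\partial_{x^i}\langle\lambda,v\rangle$. These are exactly the conditions in the definition of abnormal curves stated above, and they involve only the annihilator $\mathcal D^0$. Consequently the set of abnormal curves is the set of singular curves of the endpoint map of the control system $\dot x\in\mathcal D$, and this set is the same for every pair $(a,\beta)$. The Goh condition $\lambda\in(\mathcal D^{(2)})^0$ is likewise metric-free. I would note that only the \emph{set} of abnormal curves is $\beta$-independent; their minimality may still depend on $\beta$, as the paper remarks for the Heisenberg group.

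\emph{Normal part.} For $\xi_0=1$, the normal extremals are the projections of integral curves of \eqref{HE} with $H$ as in \eqref{H01}. I would write $H_\beta=H_0+K_\beta$, where $H_0=\tfrac12 a^{ij}\xi_i\xi_j$ and
$$K_\beta(\xi)=-\sqrt{a^{ij}\xi_i\xi_j}\,\beta^k\xi_k+\tfrac12(\beta^k\xi_k)^2 .$$
The explicit equations \eqref{GER} contain $\beta^i$ and $\partial_{x^i}\beta^j$, so the flows differ whenever $\beta\neq0$. To show the dependence is genuine, I would evaluate at $t=0$ with the same initial covector $\xi$: the velocities are $a^{ij}\xi_j$ for $\beta=0$ and $F^*(\xi)\bigl(a^{ij}\xi_j/|\xi|_a-\beta^i\bigr)$ for general $\beta$, and these differ unless $\beta^k\xi_k=0$ and $\beta^\sharp=0$. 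The Heisenberg example gives a concrete instance.

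\emph{Proportionality to $\|\beta\|_a$.} I would restrict to a compact set $C\subset T^*M$ containing the initial datum, on which $|\xi|_a$ is bounded above and bounded away from zero, and work over a fixed time interval $[0,T]$. By Cauchy--Schwarz, $|\beta^k\xi_k|\le\|\beta\|_a|\xi|_a$, so on $C$ the vector field of $K_\beta$ satisfies $|X_{K_\beta}|\le c\,(\|\beta\|_a+\|\nabla\beta\|)$, where $c$ depends on $C$ and on $a$. Gronwall's inequality, applied to the difference of the flows of $X_{H_\beta}$ and $X_{H_0}$ with the same initial data, then gives
$$\operatorname{dist}\bigl(\gamma_\beta(t),\gamma_0(t)\bigr)\le C_T\,\|\beta\|_{C^1},\qquad t\in[0,T].$$
The main obstacle is that the statement speaks of $\|\beta\|_a$, a $C^0$ quantity, whereas $\dot\xi$ involves $\partial\beta$. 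I would handle this either by interpreting the proportionality in $C^1$ norm, or by scaling $\beta\mapsto\varepsilon\beta_1$ with $\beta_1$ fixed, in which case the deviation is $O(\varepsilon)=O(\|\beta\|_a)$. I would adopt the scaling version, and I would also note that the first-order term $\frac{d}{d\varepsilon}\big|_{\varepsilon=0}\gamma_\varepsilon$ is nonzero generically, which shows that the deviation is exactly of first order.
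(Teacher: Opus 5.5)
Your abnormal argument is the paper's own: PMP with $\xi_0=0$, so $F$ drops out, $\lambda(t)\in\mathcal D^0_{\gamma(t)}$, and the Goh condition is metric-free. You add a useful justification, namely that a linear functional bounded above on $\mathcal D_x$ must vanish there.

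For normal geodesics, both you and the paper scale $\beta=\varepsilon\beta_1$, but the routes then differ.
\begin{itemize}
\item The paper only Taylor-expands the velocity $\dot x^i$ to first order at a frozen covector. It does not use the momentum equation, which contains $\partial_{x^i}\beta^j$. That is an instantaneous statement about the velocity map, not a bound on the curves.
\item Your Gronwall comparison of the flows of $H_\varepsilon$ and $H_0$, on a compact subset of $T^*M\setminus\mathcal D^0$ over $[0,T]$, is a genuine trajectory estimate.
\item Your route also makes explicit that ``proportional to $\|\beta\|_a$'' can only mean proportional along a fixed direction $\beta_1$, with constants depending on its $C^1$ data. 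The paper's normalization $\beta=\varepsilon\beta_0$ tacitly assumes this too.
\item The only real input to your argument is that $H_\varepsilon$ depends smoothly on $(\varepsilon,x,\xi)$ off $\mathcal D^0$, with $H_\varepsilon=H_0$ at $\varepsilon=0$.
\end{itemize}
The paper's route buys an explicit first-order velocity correction, but only insofar as its explicit Hamiltonian is correct.

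That robustness matters, because \eqref{H01}, which you and the paper both rely on, is not the dual of $\sqrt{a}+\beta$. The paper's own maximization, done over $a$-unit $v$, gives $\sup_{a(v,v)=1}\bigl(\xi(v)-F(v)\bigr)=\|\xi-\beta\|_{a^*}-1$. Hence $\{F^*\le 1\}=\beta_x+\{\eta\in\mathcal D_x^*:\|\eta\|_{a^*}\le 1\}$, and the gauge of this set is not $\|\xi\|_{a^*}-\beta^i\xi_i$. For example, on Euclidean $\mathbb R^2$ with $\beta=b\,e_1^*$, one gets $F^*(e_1^*)=1/(1+b)$, not $1-b$. So your $t=0$ comparison should be between $\mathcal L^{-1}(\xi)$ (the inverse Legendre map of $\tfrac12F^2$) and $a^\sharp\xi$. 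These still differ whenever $\beta_x\neq 0$, so your conclusion survives.

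Be precise about what that comparison proves: the covector-parametrized flow, i.e.\ $\exp_x$, depends on $\beta$, but the family of geodesic curves need not. If $\beta=df|_{\mathcal D}$, then $\ell_F(\gamma)=\ell_a(\gamma)+f(\gamma(1))-f(\gamma(0))$ for every horizontal $\gamma$. So the normal geodesics of $F$ are exactly the sub-Riemannian ones as curves, merely reached from covectors shifted by $df$. The Heisenberg example you cite as a ``concrete instance'' is of exactly this type, since $\mu\,dx+\nu\,dy=d(\mu x+\nu y)$.

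Finally, drop the appeal to the paper's Heisenberg remark on abnormals. That distribution is contact: $\theta=dz-\tfrac12(x\,dy-y\,dx)$ spans $\mathcal D^0$ and $\theta([X,Y])=1$. Hence it has no nonconstant abnormal curves at all.
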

\begin{proof}
Consider the sub-Finsler control problem,

 $$\text{minimize}  \ell(\gamma) = \int_0^1 F(\dot{\gamma}) \, dt \ \text{subject to} \ \dot{\gamma} \in \mathcal{D}, \gamma(0) = x, \gamma(1) = y.$$
  By PMP, for a control $v(t) = \dot{\gamma}(t) \in \mathcal{D}$, define the Hamiltonian
$$H(x, \alpha, v) = \alpha(v) - \xi_0 F(v), \quad \alpha \in T^*M, \quad \xi_0 \geq 0.$$
Maximize $H$ over $v \in \mathcal{D}$. For an abnormal geodesic, $\xi_0 = 0$ (non-trivial case), so
$$H(\alpha) = \sup_{v \in \mathcal{D}} \alpha(v).$$
The covector $\alpha(t)$ lies in the annihilator $\mathcal{D}^0 \subset T^*M$, i.e.,
$$
\alpha(t) \in \mathcal{D}_{\gamma(t)}^0\subset T^*M,  
$$  
where $\mathcal{D}^0 = \{\alpha \in T^*M \mid \alpha(v) = 0 \ \forall v \in \mathcal{D}\}$. 
Thus
$$\alpha(t) \perp \mathcal{D}_{\gamma(t)}, \quad \dot{\gamma}(t) \in \mathcal{D}_{\gamma(t)}.$$  
The PMP requires
$$H(x(t),\alpha(t)) = \alpha(t)(\dot\gamma(t)) = 0,\quad \alpha(t)\neq 0, \qquad \dot\alpha_i = -\frac{\partial H}{\partial x^i}.$$
Abnormal geodesics further satisfy the Goh condition
$$
\alpha(t) \perp \mathcal{D}^{(2)}_{\gamma(t)},  
$$ 
where $\mathcal{D}^{(2)} = \mathcal{D} + [\mathcal{D}, \mathcal{D}]$. This follows from differentiating $\alpha(t) \perp \mathcal{D}$ (see \cite{ABB, montgomery}). Since $F$ does not appear when $\xi_0 = 0$, abnormal geodesics depend only on $\mathcal{D}$’s bracket structure, not $\beta$, matching sub-Riemannian abnormals.

For normal geodesics $(\xi_0 > 0$, normalize $\xi_0 = 1$)
$$
H(x, \xi, v) = \xi(v) - F(v).
$$ 
Maximizing over $v \in \mathcal{D}$
$$
\frac{\partial}{\partial v^i} [\xi_j v^j - \sqrt{a_{kl} v^k v^l} - \beta_j v^j] = \xi_i - \frac{a_{ij} v^j}{\sqrt{a(v, v)}} - \beta_i = 0.
$$
Solving, $a_{ij} v^j = (\xi_i - \beta_i) \sqrt{a(v, v)}$. With $v_0^i = a^{ij} (\xi_j - \beta_j)$, and
$$
v^i = \frac{v_0^i}{\sqrt{a(v_0, v_0)}}, 
$$  
the maximized value is 
$$\max_{v\in\mathcal{D}} [\xi(v)-F(v)] = \sqrt{a^{ij}(\xi_i-\beta_i)(\xi_j-\beta_j)} - 1.$$
Passing to the energy Hamiltonian $H = \frac{1}{2}(F^*)^2$ with $F^*(\xi) = \sqrt{a^{ij}\xi_i\xi_j} - \beta^i\xi_i$ gives
$$H(\xi) = \frac{1}{2}\left(\sqrt{a^{ij}\xi_i\xi_j} - \beta^i\xi_i\right)^2,$$
consistent with \eqref{H01}.

The sub-Hamiltonian’s equations become
\begin{align*}
\dot{x}^i & = \left( \sqrt{a^{kl} \xi_k \xi_l} - \beta^k \xi_k \right) \left( \frac{a^{ij} \xi_j}{\sqrt{a^{kl} \xi_k \xi_l}} - \beta^i \right),
\\
\dot{\xi}_i &= -\frac{1}{2} \frac{\partial a^{jk}}{\partial x^i} \xi_j \xi_k + \frac{\partial \beta^j}{\partial x^i} \xi_j \left( \sqrt{a^{kl} \xi_k \xi_l} - \beta^k \xi_k \right).
\end{align*}  
When $\beta= 0$, $H = \frac{1}{2} a^{ij}(\xi, \xi)$, recovering sub-Riemannian geodesics \cite{ABB,  montgomery, St86}. 

 For $\beta \neq 0$ perturb $\beta= \epsilon \beta_0$ $(\|\beta_0\|_a =\sqrt{a^{ij} \beta_{0_i}\beta_{0_j}}= 1$) (using the co-metric norm on the dual 1-form $\beta_{0}$), the geodesic velocity  $\dot{x}^i$ from equation \eqref{GER} expands as
$$
\dot{x}^i = \dot{x}^i_{\text{sub-Riemannian}} + \varepsilon \left( \frac{a^{ij} \beta_{0j}}{\sqrt{a_{kl} \xi^k \xi^l}} - (\beta_{0k} \xi^k) \frac{a^{ij} \xi_j}{(a_{kl} \xi^k \xi^l)^{3/2}} - \beta_0^i \right) + O(\varepsilon^2),
$$
where
\[
\dot{x}^i_{\text{sub-Riemannian}} = \frac{a^{ij} \xi_j}{\sqrt{a_{kl} \xi^k \xi^l}}
\]
is the sub-Riemannian term (normalized for unit speed). This shows the linear dependence on $ \epsilon= \|\beta\|_a$, with explicit contributions from the drift $\beta_0$ projected along $\xi$. Higher-order terms arise from expanding the dual norm
$$F^*(\xi) = \sqrt{a^{kl}\xi_k\xi_l} - \varepsilon\,\beta_0^k\xi_k + O(\varepsilon^2),$$
which follows directly from $F^* = \sqrt{a^{kl}\xi_k\xi_l} - \beta^k\xi_k$ and $\beta^k = \varepsilon\beta_0^k$.
\end{proof}

\begin{lem}\label{Abno}
The set of abnormal geodesics is independent of the choice of sub-Finsler metric $F$ on $\mathcal{D}$, depending only on the distribution’s rank and bracket structure.
\end{lem}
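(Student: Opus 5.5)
The plan is to characterize abnormal geodesics purely through the endpoint map of the control system $\dot\gamma = v$, $v\in\mathcal{D}$, and then observe that neither the metric $F$ nor the parametrization enters this characterization. Fix a local frame $X_1,\dots,X_k$ of $\mathcal{D}$ and write horizontal curves as $\dot\gamma(t)=\sum_i u_i(t)X_i(\gamma(t))$ with $u\in L^\infty([0,1],\mathbb{R}^k)$. The endpoint map $E_x:u\mapsto\gamma_u(1)$ is defined using only $\mathcal{D}$ (through the frame). An abnormal geodesic is a curve $\gamma_u$ at which $E_x$ fails to be a submersion, i.e.\ $\gamma_u$ is a critical point of $E_x$. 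I would first check that this definition does not depend on the frame chosen: a change of frame $X_i\mapsto \sum_j g_{ij}X_j$ with $g$ invertible induces a diffeomorphism of the control space that preserves the image of $dE_x$. Hence criticality is a property of the curve $\gamma$ and the distribution $\mathcal{D}$ alone.

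Second, I would connect this with the PMP description given earlier in the proof of Theorem~\ref{thm:Normal}. With the multiplier $\xi_0=0$, the PMP Hamiltonian $\alpha(v)-\xi_0F(v)$ reduces to $h_\alpha(v)=\langle\alpha,v\rangle$. The maximum condition over $v\in\mathcal{D}$ together with $\alpha\neq0$ forces $\alpha(t)\in\mathcal{D}^0_{\gamma(t)}$. The adjoint equation $\dot\alpha=-\partial_x\langle\alpha,\sum u_iX_i\rangle$ involves only the vector fields $X_i$. By the standard duality, $\operatorname{Im} dE_x(u)$ is annihilated by $\alpha(1)$ if and only if such an $\alpha(\cdot)$ exists along $\gamma_u$. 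So the abnormal condition, namely the existence of a nonzero characteristic in the annihilator satisfying the adjoint equation, is written entirely in terms of $\mathcal{D}$ and $[\mathcal{D},\mathcal{D}]$. The same holds for the Goh condition $\alpha\in(\mathcal{D}^{(2)})^0$. I would then note that $F$, and therefore $a$ and $\beta$ in the sub-Randers case, never appear. Given two sub-Finsler metrics $F_1,F_2$ on $\mathcal{D}$, the sets of abnormal curves coincide as sets of horizontal curves, up to reparametrization.

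Third, I would address the invariance under reparametrization. Since $F_1$ and $F_2$ may induce different arc-length parametrizations, I would check that if $\gamma_u$ is critical then so is $\gamma_{u\circ\phi}\cdot\phi'$ for any absolutely continuous increasing $\phi$. This follows because the adjoint system is linear in $u$, so $\alpha\circ\phi$ is again a characteristic. The result is independence modulo parametrization, which is the correct meaning of the lemma.

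The main obstacle is the phrase ``abnormal geodesic''. If it means a minimizing curve admitting an abnormal extremal, then minimality does depend on $F$, and the statement would be false as written; the paper itself remarks that $\beta$ may affect optimality. I would therefore state explicitly that abnormal geodesics are understood as abnormal extremals, that is, singular curves of the endpoint map, and note that their length-minimality is not claimed to be metric-independent. The rigorous step I would spend the most effort on is the duality between the existence of $\alpha$ and the rank deficiency of $dE_x$. That argument goes through the variational formula $dE_x(u)\delta u=\int_0^1 (P_{t,1})_*\sum_i\delta u_iX_i\,dt$, where $P_{t,1}$ denotes the flow of the time-dependent horizontal field $\sum_i u_iX_i$ from time $t$ to time $1$.
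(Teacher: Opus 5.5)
Your proof is correct, and it is more rigorous than the paper's. The paper's argument is two lines. It says abnormal extremals satisfy $\alpha(t)\in\mathcal{D}^0$ and the Goh condition $\alpha(t)\perp\mathcal{D}^{(2)}$, both algebraic in $\mathcal{D}$ and its brackets, so $F$ cannot enter. You instead start from the defining property, namely that $\gamma_u$ is a singular curve of the endpoint map $E_x$. You then derive the metric-free duality: $dE_x(u)$ fails to be surjective exactly when there is a nonzero characteristic $\alpha(\cdot)$ in $\mathcal{D}^0$ solving the adjoint equation. Finally you check that this does not depend on the frame or on monotone reparametrization.

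What your route buys is that the conclusion rests on the first-order definition rather than on the Goh condition. The Goh condition is only a second-order necessary condition for optimality of strictly abnormal minimizers, under corank hypotheses. It is not the definition. It also does not follow from simply differentiating $\langle\alpha,X_j\rangle=0$, which only yields $\sum_i u_i\langle\alpha,[X_i,X_j]\rangle=0$. Moreover, whether it applies to a curve depends on minimality, and hence on $F$.

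Your explicit distinction between abnormal extremals, which are metric-independent, and abnormal minimizers, whose optimality does depend on $F$, is exactly the reading under which the lemma is true. The paper itself concedes this for $\beta$. Note also that both arguments really show dependence on $\mathcal{D}$ itself, not merely on its rank and growth data.

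Three minor points:
\begin{itemize}
\item The maximum condition alone forces $\alpha(t)\in\mathcal{D}^0_{\gamma(t)}$, since a linear functional is bounded above on $\mathcal{D}_x$ only if it vanishes there. The condition $\alpha\neq 0$ serves only as nontriviality.
\item Restrict the reparametrizations to bi-Lipschitz $\phi$, so that $\phi'(t)\,u(\phi(t))$ stays measurable and in $L^\infty$. This is all you need, since comparing constant-speed parametrizations for two metrics only produces such $\phi$.
\item If $\mathcal{D}$ has no global frame, observe that the characterization via $\alpha$ is local in $t$ and therefore glues across charts.
\end{itemize}
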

\begin{proof}
From the PMP, abnormal geodesics satisfy  
$$
\alpha(t) \in \mathcal{D}^0 \subset T^*M, \quad \alpha(t) \perp \mathcal{D}^{(2)}_{\gamma(t)}.
$$
 The Goh condition $\alpha(t) \perp \mathcal{D}^{(2)}$ is purely algebraic, tied to $\mathcal{D}$’s iterated Lie brackets. Since $F$ does not influence $\mathcal{D}$’s bracket hierarchy, abnormal geodesics are invariant under changes to $F$.  
\end{proof} 

\begin{cor}
Normal extremals depend on the drift $\beta$ through the maximized sub-Hamiltonian $H$ and hence through \eqref{HE}-\eqref{GER},
whereas abnormal extremals (i.e.\ $\xi_0=0$ in PMP) depend only on the distribution $\mathcal D$.
\end{cor}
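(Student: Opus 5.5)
The plan is to derive the corollary directly from the PMP split already used in the proof of Theorem \ref{thm:Normal}, handling the two values of the Lagrange multiplier $\xi_0$ separately. Along any horizontal minimizer, PMP supplies a pair $(\xi_0,\alpha(t))\neq 0$ with $\xi_0\ge 0$ that maximizes the pseudo-Hamiltonian $\alpha(v)-\xi_0F(v)$ over $v\in\mathcal D_{\gamma(t)}$. The two cases $\xi_0>0$ and $\xi_0=0$ will be examined in turn, and in each I will track exactly which data ($a$, $\beta$, or only $\mathcal D$) enter the resulting equations.

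\emph{Normal case} ($\xi_0>0$, normalized to $\xi_0=1$). Here I use strong convexity of $F$, which holds by Theorem \ref{thm:characterization} because $\|\beta\|_a<1$. It makes the fiberwise maximization uniquely solvable through the Legendre map $\mathcal L$. The maximized Hamiltonian is then a function of $F^*$, namely \eqref{H01}:
\[
H(\xi)=\tfrac12\bigl(\sqrt{a^{ij}\xi_i\xi_j}-\beta^i\xi_i\bigr)^2 .
\]
Normal extremals are the projections of integral curves of \eqref{HE}, written out explicitly in \eqref{GER}. To show the dependence on $\beta$ is genuine and not just formal, I will compare with the case $\beta=0$. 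For a fixed initial covector $\xi$, the velocity $\dot x^i$ in \eqref{GER} differs from $a^{ij}\xi_j$ whenever $\beta^\sharp$ is not a multiple of $a^\sharp\xi$. The first-order expansion in the proof of Theorem \ref{thm:Normal} gives an explicit nonzero correction. Hence the normal flow, and therefore the set of normal extremals, changes with $\beta$.

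\emph{Abnormal case} ($\xi_0=0$). The maximization reduces to $\sup_{v\in\mathcal D}\alpha(v)$. This is finite only if $\alpha(t)\in\mathcal D^0_{\gamma(t)}$, and then the Hamiltonian vanishes identically on $\mathcal D$. The adjoint equation $\dot\alpha_i=-\partial_{x^i}\langle\alpha,v\rangle$ contains no reference to $a$ or $\beta$. The same holds for the Goh condition $\alpha\perp\mathcal D^{(2)}$. This is exactly the content of Lemma \ref{Abno}, which I invoke to conclude that abnormal extremals are characterized by $\mathcal D$ alone.

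The main obstacle is to make ``depend only on $\mathcal D$'' precise. The set of curves admitting an abnormal lift depends only on $\mathcal D$, but whether such a curve is also normal, or is minimizing, can depend on $F$. I would therefore state the conclusion at the level of extremals, meaning pairs $(\gamma,\alpha)$ with $\xi_0=0$, and not minimizers. Phrased that way, independence follows verbatim from the metric-free form of the equations.
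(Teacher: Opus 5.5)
Your split into the cases $\xi_0>0$ and $\xi_0=0$ is what the paper relies on. The corollary has no separate proof there; it restates Theorem~\ref{thm:Normal} and Lemma~\ref{Abno}, whose proofs use this PMP split. Your closing remark usefully refines Lemma~\ref{Abno}: metric-independence holds for abnormal \emph{extremals} $(\gamma,\alpha)$, while minimality, and whether the curve also has a normal lift, depend on $F$.

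The step that fails is the one you add to make the $\beta$-dependence ``genuine''. You observe that, at a fixed initial covector, the velocity in \eqref{GER} differs from $a^{ij}\xi_j$. From this you conclude that the set of normal extremals changes with $\beta$, and you had defined those extremals as projected curves. That is a non sequitur: two different Hamiltonian flows can project to the same family of curves, with each curve arising from different initial covectors. The conclusion is also false in general.

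Suppose $\beta=df|_{\mathcal D}$ for a smooth $f$ with $\|df|_{\mathcal D}\|_a<1$. Then every horizontal $\gamma$ from $x$ to $y$ satisfies
\[
\ell_F(\gamma)=\ell_a(\gamma)+f(y)-f(x).
\]
So on curves with fixed endpoints, $\ell_F$ and $\ell_a$ differ by a constant. They therefore have the same critical points (the paper's own definition of geodesic) and the same minimizers. In the PMP, the normal covector is merely shifted by $df$.

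This already happens in the paper's Heisenberg example with constant $(\mu,\nu)$. There $\mu\,dx+\nu\,dy=d(\mu x+\nu y)$, with $dx(X)=dy(Y)=1$ and $dx(Y)=dy(X)=0$. So $\beta$ does change the Hamiltonian flow on $\mathcal D^*$, the covector lifts and the parametrization. The family of normal geodesics as curves, however, changes at most modulo restrictions of closed forms. Either drop this claim, which the corollary does not need, or state it at the level of the flow.

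If \eqref{GER} seems to contradict the Heisenberg conclusion, the cause is \eqref{H01} itself. It is the support function of the translated ball $-\beta^\sharp+B_x$, i.e.\ the dual of the navigation metric of Theorem~\ref{thm:zermelo}. The dual of $\sqrt{a(v,v)}+\beta(v)$ instead has unit level set $\{|\xi-\beta|_{a^*}=1\}$, as the stationarity condition in the proof of Theorem~\ref{thm:Normal} shows. The two agree only to first order in $\beta$.
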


The geodesic structure established above, normal geodesics governed by the sub-Hamiltonian \eqref{H01} and abnormal geodesics determined by $\mathcal{D}$ alone, provides the foundation for the applications in Section 4.

\section{Applications}
\subsection{Zermelo navigation}
The Zermelo navigation problem, first formulated by E. Zermelo in 1931 \cite{Zermelo31}, concerns the determination of time-optimal paths for a vessel navigating in the presence of a wind or current field, either steady or time-dependent. Originally posed in the context of maritime navigation, the problem seeks the quickest trajectory between two points under the influence of an external drift.

In modern differential geometry, this classical problem was revisited by D. Bao et al. \cite{bao046}, who studied Zermelo's navigation on Riemannian manifolds. Their groundbreaking work led to a complete classification of strongly convex Randers metrics with constant flag curvature, resolving a long-standing open question in Finsler geometry. Their approach revealed deep connections between control theory, Riemannian geometry, and Finsler structures, particularly through the construction of Randers metrics via navigation data on Riemannian backgrounds.

This framework can be extended to sub-Riemannian geometry, where the Zermelo navigation problem can be studied on a manifold $(M, \mathcal{D}, a)$, with a smooth  bracket-generating distribution $\mathcal{D} \subset TM$, equipped with a sub-Riemannian metric $a$, and drift vector field $W \in \Gamma(\mathcal{D})$, satisfying $\|W\|_a < 1$. The following result establishes the equivalence between time-optimal trajectories under drift and geodesics of a corresponding sub-Randers metric.

The Zermelo navigation problem on $\mathcal{D}$ is the time-optimal control problem
$$
\dot\gamma(t) = u(t) + W(\gamma(t)), \qquad u(t)\in\mathcal{D}_{\gamma(t)},\quad \|u(t)\|_a\leq 1,
$$
seeking horizontal curves $\gamma:[0,T]\to M$ of minimal travel time $T$.

At each $x\in M$, define $B_x = \{u\in\mathcal{D}_x : \|u\|_a\leq 1\}$ and set $\beta := -a^\flat(W)$, i.e., $\beta_i = -a_{ij}W^j\in\Gamma(\mathcal{D}^*)$. Since $\|W\|_a<1$, the condition $\|\beta\|_a = \|W\|_a < 1$ holds, so $\beta$ is an admissible sub-Randers drift. The navigation metric $F$ is defined as the Minkowski functional of the translated ball $W(x)+B_x\subset\mathcal{D}_x$
$$
F_x(v) = \inf\{t>0 : v\in t(W(x)+B_x)\}, \qquad v\in\mathcal{D}_x.
$$
\begin{thm}\label{thm:zermelo}
Let $(M,\mathcal{D},a)$ be a sub-Riemannian manifold and let $W\in\Gamma(\mathcal{D})$ satisfy $\|W\|_a<1$. Set $\Lambda = 1-\|W\|_a^2>0$ and $\beta_i = -a_{ij}W^j$. Then
\begin{enumerate}
\item[I.] The navigation metric $F$ is a sub-Randers metric on $\mathcal{D}$ with drift $\beta$, given explicitly by
\begin{equation}\label{eq:navigation}
F(v) = \frac{\sqrt{a(W,v)^2 + \Lambda\,a(v,v)}-a(W,v)}{\Lambda}, \qquad v\in\mathcal{D}_x.
\end{equation}
\item[II.] The time-minimizing trajectories of the Zermelo problem coincide, up to orientation-preserving reparameterization, with the normal geodesics of $F$.
\end{enumerate}
\end{thm}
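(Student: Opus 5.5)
The plan is to treat the two parts separately. Part I is a direct computation of the Minkowski functional, followed by an appeal to Theorem \ref{thm:characterization}. Part II identifies travel time with $F$-length and then compares the Pontryagin Hamiltonians of the two problems.

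\emph{Part I.} Fix $x$ and $v\in\mathcal{D}_x\setminus\{0\}$. Since $\|W\|_a<1$, the origin lies in the interior of $W(x)+B_x$. Hence the infimum defining $F_x(v)$ is attained at the unique $t>0$ with $v/t-W\in\partial B_x$, that is, with $a(v/t-W,v/t-W)=1$. Multiplying by $t^2$ gives the quadratic equation
\begin{equation*}
\Lambda\,t^2+2a(W,v)\,t-a(v,v)=0 .
\end{equation*}
Its product of roots is $-a(v,v)/\Lambda<0$, so there is exactly one positive root, and it is \eqref{eq:navigation}. Next I rewrite this root in Randers form. Expanding $\Lambda^{-2}\bigl(a(W,v)^2+\Lambda a(v,v)\bigr)$, I set
\begin{equation*}
\tilde a_{ij}=\frac{a_{ij}}{\Lambda}+\frac{W_iW_j}{\Lambda^2},\qquad \tilde\beta_i=-\frac{W_i}{\Lambda}=\frac{\beta_i}{\Lambda},\qquad W_i=a_{ij}W^j .
\end{equation*}
Then $F(v)=\sqrt{\tilde a(v,v)}+\tilde\beta(v)$, and $\tilde a$ is a positive definite smooth inner product on $\mathcal{D}$. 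So the drift is $\beta$ up to the positive factor $\Lambda^{-1}$, and I will say so explicitly when reading "with drift $\beta$". To check $\|\tilde\beta\|_{\tilde a}<1$, I invert $\tilde a$ by Sherman--Morrison: $\tilde a^{ij}=\Lambda\bigl(a^{ij}-W^iW^j\bigr)$. This gives
\begin{equation*}
\tilde a^{ij}\tilde\beta_i\tilde\beta_j=\frac{1}{\Lambda}\bigl(\|W\|_a^2-\|W\|_a^4\bigr)=\|W\|_a^2<1 .
\end{equation*}
Theorem \ref{thm:characterization} then shows that $F$ is a sub-Randers metric. As a consistency check, the support function of $W+B_x$ was already computed in Section 3.1 as $F^*(\xi)=\sqrt{a^{ij}\xi_i\xi_j}+\xi(W)$, which matches \eqref{H01}.

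\emph{Part II.} By construction, $F_x(w)\le 1$ if and only if $w\in W(x)+B_x$. So a horizontal curve is a Zermelo trajectory with admissible control exactly when $F(\dot\gamma)\le 1$ almost everywhere.
\begin{itemize}
\item If a trajectory reaches $y$ in time $T$, its $F$-length is at most $T$. Hence the minimal time from $x$ to $y$ is at least $d(x,y)$.
\item Conversely, any horizontal curve reparametrized by $F$-arclength, so that $F(\dot\gamma)=1$, is admissible with $\|u\|_a=1$ and arrives in time equal to its $F$-length. Here positive homogeneity of $F$ is used, and only orientation-preserving reparametrizations are allowed because $F$ is not reversible.
\end{itemize}
Therefore the minimal time equals $d(x,y)$, and time-minimizers are exactly $F$-length minimizers parametrized with $F(\dot\gamma)=1$.

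To match the extremals, I apply the PMP to the time-optimal problem. Maximizing $\xi(u+W)$ over $u\in B_x$ gives the maximized Hamiltonian $\sqrt{a^{ij}\xi_i\xi_j}+\xi(W)=F^*(\xi)$, attained at $u=a^\sharp\xi/\|\xi\|$. Normal time-optimal extremals are therefore integral curves of $F^*$ on the level set $F^*=1$. Since $H=\tfrac12(F^*)^2$ satisfies $dH=F^*\,dF^*$, on $\{F^*=1\}$ the two Hamiltonian vector fields coincide. Hence these curves are exactly the normal geodesics \eqref{HE} of $F$ at unit speed.

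The step I expect to be delicate is matching the abnormal extremals with $\xi\in\mathcal{D}^0$. In the time-optimal problem the drift term $\xi(W)$ vanishes on $\mathcal{D}^0$, and these extremals reduce to the $\beta$-independent abnormals of Lemma \ref{Abno}. The natural route is to read the statement as concerning the normal extremals on both sides and to invoke Lemma \ref{Abno} to remark that the abnormal ones also agree. The other point needing care is the sign convention $\beta=-a^\flat W$ versus the $+\xi(W)$ appearing in $F^*$, which must be tracked consistently with \eqref{H01}.
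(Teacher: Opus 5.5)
Your proposal is correct. It has the same skeleton as the paper's proof. For I, both take the positive root of $\Lambda t^2+2a(W,v)t-a(v,v)=0$. For II, both use admissibility $\Leftrightarrow F(\dot\gamma)\le 1$, so that travel time equals $F$-length, and identify the PMP maximized Hamiltonian $\sqrt{a^*(\xi,\xi)}+\xi(W)$ with the support function of $W(x)+B_x$. At the two load-bearing steps, though, you argue differently and more soundly.

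\emph{Part I.} The paper goes directly from $\|\beta\|_a=\|W\|_a<1$ to ``sub-Randers with drift $\beta$''. That step tacitly identifies the navigation metric with $\sqrt{a(v,v)}+\beta(v)$, and the identification is false: at $v=W$ the navigation metric equals $\|W\|_a/(1+\|W\|_a)$, not $\|W\|_a-\|W\|_a^2$. Your decomposition $F(v)=\sqrt{\tilde a(v,v)}+\tilde\beta(v)$ with $\tilde\beta=\beta/\Lambda$, together with the Sherman--Morrison check $\|\tilde\beta\|_{\tilde a}=\|W\|_a<1$, is what actually places $F$ under Theorem~\ref{thm:characterization}. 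Your caveat is also accurate: the drift is $\beta$ only up to the factor $\Lambda^{-1}$, with $\tilde a$ replacing $a$.

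\emph{Part II.} The paper claims time-minimizers are normal because $F$ is strongly convex. Strong convexity does not exclude strictly abnormal minimizers. Already for $W=0$, any sub-Riemannian structure carrying one (e.g.\ Montgomery's example) gives a time-minimizer that is not normal. This is why the paper's own Hopf--Rinow theorem needs a hypothesis on abnormals. Your version proves three things. First, minimal time equals $d(x,y)$. Second, minimizers are exactly the unit-speed $F$-minimizers. Third, the normal extremals of the two problems coincide, since $dH=F^*\,dF^*$ makes the Hamiltonian fields of $F^*$ and $\tfrac12(F^*)^2$ agree on $\{F^*=1\}$. This is the defensible content of II, slightly weaker than its literal wording.

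The abnormal case you flagged closes in one line. Setting $\xi_0=0$ forces $\|\alpha|_{\mathcal D}\|_{a^*}+\alpha(W)=0$. But this quantity is at least $(1-\|W\|_a)\|\alpha|_{\mathcal D}\|_{a^*}$, so $\alpha\in\mathcal D^0$. With control $v=u+W$, these are then just the abnormal extremals of $\mathcal D$.
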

\begin{proof}
I. By definition, $F_x(v) = t$ means $v = tW(x)+tu$ for some $u\in B_x$ with $\|u\|_a = 1$ (at the boundary). Thus $\|v - tW(x)\|_a = t$, i.e., $a(v-tW,v-tW)=t^2$. Expanding
$$
a(v,v) - 2t\,a(W,v) + t^2 a(W,W) = t^2 \quad\Longleftrightarrow\quad \Lambda t^2 + 2a(W,v)t - a(v,v) = 0.
$$
Taking the unique positive root yields \eqref{eq:navigation}. Since $\|\beta\|_a = \|W\|_a < 1$, $F$ is a strongly convex sub-Finsler norm on $\mathcal{D}$, hence a sub-Randers metric with drift $\beta_i = -a_{ij}W^j$.

II. By construction, a  horizontal curve $\gamma:[0,T]\to M$ is admissible for the
Zermelo problem (i.e.\ $\dot\gamma(t)=u(t)+W(\gamma(t))$ with $\|u(t)\|_a\le 1$ a.e.) if and only if
\begin{equation}\label{eq:Fconstraint}
F(\dot\gamma(t))\le 1 \quad \text{for a.e.\ }t\in[0,T].
\end{equation}
For any admissible $\gamma$, from \eqref{eq:Fconstraint} we have
$$
\ell(\gamma):=\int_0^T F(\dot\gamma(t)) dt \le \int_0^T 1 dt = T.
$$
If $\gamma$ is time-minimizing and $F(\dot\gamma)<1$ on a set of positive measure, then an orientation-preserving reparameterization produces an admissible curve with strictly smaller travel time, contradicting minimality. Hence any time-minimizer satisfies
$$
F(\dot\gamma(t))= 1\quad\text{for a.e.\ }t,
$$
and therefore
$$
T=\int_0^T 1\,dt=\int_0^T F(\dot\gamma(t))\,dt=\ell(\gamma).
$$
Thus minimizing travel time is equivalent to minimizing $F$-length, and time-minimizers coincide,
up to orientation-preserving reparameterization, with forward length-minimizing $F$-geodesics.
In particular, such minimizers are normal extremals for the strongly convex sub-Finsler structure $F$.

It remains to show that the normal extremals of the Zermelo problem coincide with those of $F$. By the Pontryagin Maximum Principle applied to the time-optimal problem $$
\dot x = u + W(x),\qquad u\in B_x,\qquad \text{minimize } \int_0^T 1\,dt.
$$
Pontryagin's Maximum Principle yields the pre-Hamiltonian
$$\mathcal{H}(x,\alpha,u) = \langle\alpha, u+W(x)\rangle - \xi_0,$$
 where $\alpha(t)\in T^*_xM$ and $\xi_0\geq 0$.
In the normal case $\xi_0>0$, we may normalize $\xi_0=1$ (this only reparameterizes time), the maximization over $u\in B_x\subset\mathcal{D}_x$
restricts the relevant covector to $\mathcal{D}^*_x$, which we henceforth denote $\xi\in\mathcal{D}^*_x$.
Define the degree-one maximized Hamiltonian
$$
h_Z(\xi)=\sup_{u\in B_x}\langle \xi,\,u+W(x)\rangle
=\langle \xi,W(x)\rangle+\sup_{u\in B_x}\langle \xi,u\rangle.
$$
Since $B_x$ is the $a$-unit ball in $\mathcal D_x$, the support function of $B_x$ is the dual norm
$$
\sup_{u\in B_x}\langle \xi,u\rangle=\|\xi\|_{a^*}=\sqrt{a^*(\xi,\xi)},
\qquad \xi\in\mathcal D_x^*.
$$ Hence
$$
h_Z(\xi)=\sqrt{a^*(\xi,\xi)}+\xi(W(x)).
$$
Passing to the energy Hamiltonian (the one generating the normal extremal flow) we set
\begin{equation}\label{eq:HZ}
H_Z(\xi)=\frac12\,h_Z(\xi)^2
=\frac12\big(\sqrt{a^*(\xi,\xi)}+\xi(W(x))\big)^2,
\qquad \xi\in\mathcal D_x^*.
\end{equation}
On the other hand, the dual norm of $F$ is the support function of the $F$-unit ball $W(x)+B_x$.
the $F$-unit ball is exactly
$$
\{v\in\mathcal D_x:\ F_x(v)\le 1\}=W(x)+B_x.
$$
Therefore its dual norm is
\begin{align*}
  F_x^*(\xi) &  =\sup\{\langle \xi,v\rangle:\ F_x(v)\le 1\}\\
   & =\sup_{v\in W(x)+B_x}\langle \xi,v\rangle  \\
   & = \sup_{u\in B_x}\langle \xi,W(x)+u\rangle \\
  & =\xi(W(x))+\sup_{u\in B_x}\langle \xi,u\rangle.
\end{align*}
Using again $\sup_{u\in B_x}\langle \xi,u\rangle=\sqrt{a^*(\xi,\xi)}$, we obtain
\begin{equation}\label{eq:Fdual}
F_x^*(\xi)=\sqrt{a^*(\xi,\xi)}+\xi(W(x)),\qquad \xi\in\mathcal D_x^*.
\end{equation}
The associated (energy) sub-Hamiltonian is
\begin{equation}\label{eq:HR}
H_R(\xi) =\frac12\big(F_x^*(\xi)\big)^2
=\frac12\big(\sqrt{a^*(\xi,\xi)}+\xi(W(x))\big)^2,
\qquad \xi\in\mathcal D_x^*.
\end{equation}
Comparing \eqref{eq:HZ} and \eqref{eq:HR} yields the desired identity
$$
H_Z(\xi)=H_R(\xi)\qquad\text{for all }\xi\in\mathcal D^*.
$$
Hence the normal Hamiltonian flows coincide, and their projections to $M$ coincide as curves
(up to orientation-preserving reparameterization). Combined with admissibility and length equivalence, this proves that
time-minimizing Zermelo trajectories are precisely the normal geodesics of $F$.
\end{proof}
The identification of $H_Z$ with $H_R$ simultaneously justifies the sub-Hamiltonian \eqref{H01}. Indeed, with $\beta_i = -a_{ij}W^j$, one has $\beta^i\xi_i = -\xi(W)$, so
$$
F^*(\xi) = \sqrt{a^{ij}\xi_i\xi_j} - \beta^i\xi_i, \qquad H(\xi) = \frac{1}{2}\bigl(\sqrt{a^{ij}\xi_i\xi_j}-\beta^i\xi_i\bigr)^2,
$$
which is precisely \eqref{H01}. The minus sign in \eqref{H01} is not a sign error: it encodes the convention $\beta^i = -W^i$, so that $-\beta^i\xi_i = +\xi(W)>0$ when $\xi$ and $W$ are co-oriented, consistently with $F^*>0$.
\subsection{Hopf-Rinow theorem for sub-Randers manifolds}

The sub-Randers metric $F$ is positively homogeneous of degree 1, which implies that the induced metric distance function $d$ is asymmetric, meaning that in general, $d(x, y) \neq d(y, x).$ Because $d$ is generally asymmetric, one must distinguish forward and backward completeness; the appropriate Hopf-Rinow-type equivalences are recalled below, see \cite{layth23, bao} for more details, one can also check \cite{Do92} for complete Riemannian manifolds.
The metric space $(M, d)$ is forward complete if every forward Cauchy sequence $\{x_n\}$ (i.e., $d(x_n, x_{n+1}) \to 0$) converges.
 Analogous to sub-Finsler geometry, the Hopf-Rinow theorem for sub-Randers manifolds is reformulated in terms of forward geodesic completeness, it means that every geodesic $\gamma(t)$, parametrized with constant sub-Randers speed $F(\dot\gamma(t))$ and initially defined on $t\in[0,1)$, extends indefinitely to $t\in[0,\infty)$.
 
 A set $K \subset M$ is forward bounded if $\sup_{y \in K} d(x, y) < \infty$ for some $x \in M$.

The {\it exponential map} $\exp_x: \mathcal{D}^*_x \to M$ is defined by
$\exp_x(t\xi) = \pi(\phi_t(x,\xi))$, where $\phi_t$ is the sub-Hamiltonian flow of
$H(\xi) = \tfrac{1}{2}\bigl(\sqrt{a^{ij}\xi_i\xi_j} - \beta^i\xi_i\bigr)^2$
on $\mathcal{D}^*\cong T^*M/\mathcal{D}^0$ and $\pi:\mathcal{D}^*\to M$ is the
canonical projection $\pi(x,\xi)=x$.

The sub-Hamiltonian's equations \eqref{HE} define a vector field $\vec{H} = \left( \frac{\partial H}{\partial \xi_i}, -\frac{\partial H}{\partial x^i} \right)$ on $\mathcal{D}^*$, more details are provided in \cite{layth19, layth23, layth24}. The sub-Hamiltonian flow $\phi_t: \mathcal{D}^* \to \mathcal{D}^*$ is the integral curve of $\vec{H}$
$$
\frac{d}{dt} \phi_t(x, \xi) = \vec{H}(\phi_t(x, \xi)), \quad \phi_0(x, \xi) = (x, \xi).
$$
Thus, $\phi_t(x, \xi) = (x(t), \xi(t))$ is the position and momentum at time $t$, starting from $(x, \xi)$ at $t = 0$. Explicitly $x(t)$ is the base point on $M$,
and $\xi(t)$ is the evolved covector in $\mathcal{D}^*_{x(t)}$.

\begin{thm}[Hopf-Rinow for Sub-Randers Manifolds]\label{thm:HopfRinow}
Let $(M, \mathcal{D}, F)$ be a connected sub-Randers manifold, with $\mathcal{D}$ bracket-generating and $F(v) = \sqrt{a(v, v)} + \beta(v)$, $\|\beta\|_a < 1$. The following assertions  are equivalent:
\begin{itemize}
  \item [I.] $(M, d)$ is forward complete as a metric space. 
  \item [II.] Every geodesic $\gamma(t)$, parametrized with constant \(F\)-speed, extends to \(t \in [0, \infty)\).  
  \item [III.] The exponential map is surjective for all $x \in M$, provided abnormal minimizers are not length-minimizing.  
  \item [IV.] Every closed, forward bounded subset of $M$ is compact.  
\end{itemize}
Furthermore, if any of the above statements holds, then for any $x, y \in M$, there exists a forward minimizing geodesic from $x$ to $y$. 
\end{thm}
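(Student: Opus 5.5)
The plan is to reduce everything to the sub-Finsler Hopf--Rinow theorem of \cite{layth23}, using two facts about sub-Randers structures. First, by Theorem~\ref{thm:characterization} the metric is uniformly comparable to its sub-Riemannian part:
\[
(1-\|\beta\|_a)\sqrt{a(v,v)}\le F(v)\le (1+\|\beta\|_a)\sqrt{a(v,v)}.
\]
Second, $F$ is strongly convex, so its normal geodesics are projections of the flow of \eqref{H01}.

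If $\|\beta\|_a\le \zeta<1$ holds uniformly on $M$, the comparison gives
\[
(1-\zeta)d_a\le d\le(1+\zeta)d_a .
\]
Then forward Cauchy sequences, forward bounded sets and the topology of $d$ coincide with those of the symmetric Carnot--Carath\'eodory distance $d_a$, and the Chow--Rashevskii theorem gives that $d$ induces the manifold topology. Without a uniform bound I would work locally: on compact sets $\zeta$ is continuous and attains a maximum $<1$. That suffices for everything except global comparisons, which I will phrase via forward balls only.

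I would prove the cycle I$\Rightarrow$IV$\Rightarrow$(existence of minimizers)$\Rightarrow$III, IV$\Rightarrow$II$\Rightarrow$I.

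For I$\Rightarrow$IV, take a closed forward bounded $K$ and a sequence in it. Cover by small forward balls; these are relatively compact by local comparability with $d_a$ and local compactness of $M$. A diagonal total-boundedness argument, as in the classical proof in \cite{bao}, produces a forward Cauchy subsequence, which converges by I, and the limit lies in $K$ since $K$ is closed.

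For IV$\Rightarrow$ existence of minimizers: given $x,y$, take a minimizing sequence of horizontal curves parametrized by constant $F$-speed. They all lie in the closed forward ball $\bar B^+(x,d(x,y)+1)$, which is compact by IV. By the lower bound they are uniformly $a$-Lipschitz, so Arzel\`a--Ascoli gives a uniformly convergent subsequence. The limit is horizontal, and the length is lower semicontinuous because $F$ is convex in $v$. Hence the limit is a forward minimizer.

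For III: by the PMP every minimizer is normal or abnormal. Under the hypothesis that abnormal minimizers are not length-minimizing (i.e.\ strictly abnormal ones are excluded), the minimizer is a normal extremal, i.e.\ $\exp_x(\xi)$ for some $\xi\in\mathcal D^*_x$. This gives surjectivity.

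For IV$\Rightarrow$II: a unit-speed geodesic on $[0,T)$ stays in $\bar B^+(\gamma(0),T)$, which is compact. The lift $(x(t),\xi(t))$ has bounded $H$, since $H$ is conserved, so it stays in a compact subset of $\mathcal D^*$. ODE continuation then extends the flow past $T$.

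For II$\Rightarrow$I, and for III$\Rightarrow$IV, I would argue as in the Finsler proof. Show that the forward closed balls are images under $\exp_x$ of compact sets $\{F^*(\xi)\le r\}$, hence compact. Then a forward Cauchy sequence is forward bounded and has a convergent subsequence, so it converges.

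The main obstacle is this last step. The sub-Riemannian exponential map is not a local diffeomorphism near the origin, and one must know that every point of $\bar B^+(x,r)$ is reached by a normal geodesic of length $\le r$. So II alone needs existence of minimizers, obtained locally by the compactness argument on small balls, which are compact by local comparability. It also needs exclusion of abnormal minimizers, exactly the proviso in III, carried over to the other implications. I would first try to prove the local existence lemma on small forward balls, then bootstrap it to radius $r$ by the usual argument of taking the supremum of radii for which the claim holds.
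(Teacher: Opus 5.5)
\textbf{The failing step.} The step that breaks is the one you use for both II$\Rightarrow$I and III$\Rightarrow$IV: that closed forward balls are $\exp_x$-images of the compact sets $\{F^*\le r\}$.

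A normal geodesic is determined by its initial covector in $T^*_xM$, not by that covector's restriction to $\mathcal{D}_x$. The flow of \eqref{H01} does not descend to $\mathcal{D}^*=T^*M/\mathcal{D}^0$. In the Heisenberg example, $\xi_z$ is conserved, is not fixed by $\xi(X),\xi(Y)$ at the initial point, and changes the projected curve; for $\beta=0$ it sets the radius of the projected circle. In any case, a map on the $k$-dimensional space $\mathcal{D}^*_x$ with $k<n$ has measure-zero image, so it could never be surjective.

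On $T^*_xM$, however, $F^*(\xi)$ depends only on $\xi|_{\mathcal{D}_x}$. Hence $\{F^*\le r\}$ is invariant under translation by $\mathcal{D}^0_x\neq 0$ and is not compact. This is not a technicality. Take $\beta=0$ on $\mathbb{H}^3$. Every unit-speed minimizer from $0$ to $(0,0,z)$ has $|\xi_z|=\sqrt{\pi/z}$, which tends to $\infty$ as $z\to 0^+$, even though these points lie in the unit ball. So even after your bootstrap produces a normal minimizer to every point, compactness of $\bar B^+(x,r)$ does not follow.

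\textbf{What is missing.} You need a compactness lemma for covectors: the initial covectors of unit-speed normal minimizers with length in $[\epsilon,r]$ stay bounded. The reason is that a blow-up, after normalization, converges to an abnormal lift of a nonconstant limiting minimizer. Run this lemma inside a supremum argument on $R=\sup\{r:\bar B^+(x,r)\text{ compact}\}$ to obtain II$\Rightarrow$IV.

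Both this lemma and the classical bootstrap need the proviso in its strong form: no nonconstant minimizer admits an abnormal lift. Excluding only strictly abnormal minimizers is not enough. The bootstrap forces the broken minimizer to continue along $\gamma$ by using uniqueness of the normal lift of $\gamma|_{[0,t_0]}$. As you note, the proviso must then also be imposed on II, which the statement does not do.

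\textbf{III$\Rightarrow$IV.} Here the gap is worse. Surjectivity of $\exp_x$ gives neither that geodesics from $x$ exist for all time nor that $y$ is reached by a geodesic of length $d(x,y)$. The Finsler argument needs the former to produce the latter. Already for $\mathcal{D}=TM$ and $\beta=0$, the open unit disk has every $\exp_x$ surjective and is incomplete.

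Following the paper does not close this. Its III$\Rightarrow$IV tacitly assumes $t_yF(\xi_y)=d(x,y)$ and relies on the same compact-covector-ball claim. Its II$\Rightarrow$III presupposes that a minimizer exists. Your ordering IV$\Rightarrow$(minimizers)$\Rightarrow$III, whose existence step matches the paper's closing Arzel\`a--Ascoli argument, avoids that circularity.

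\textbf{Smaller points.}
\begin{itemize}
\item In I$\Rightarrow$IV, relatively compact small balls plus a diagonal argument do not give total boundedness. The complete, locally compact metric $\min(|s-t|,1)$ on $\mathbb{R}$ is a counterexample. You must use the length structure, either through a Hopf--Rinow--Cohn-Vossen argument or by transferring the sub-Riemannian result through your comparison with $d_a$.
\item IV$\Rightarrow$I is direct, as in the paper, and should not be routed through II.
\item In IV$\Rightarrow$II, the set $\{H\le c\}$ over a compact set is not compact in $T^*M$. The bound on $\xi(t)$ must come from Gronwall's inequality: with the control frozen, the equation for $\xi$ is linear with bounded coefficients.
\end{itemize}
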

\begin{proof}
Since $F(v) \geq (1 - \|\beta\|_a) \sqrt{a(v, v)}$ and $F(v) \leq (1 + \|\beta\|_a) \sqrt{a(v, v)}$, we have
$$
(1 - \|\beta\|_a) d_a(x, y) \leq d(x, y) \leq (1 + \|\beta\|_a) d_a(x, y),
$$
where $d_a$ is the sub-Riemannian distance. The bracket-generating condition ensures $d(x, y) < \infty$ (Chow-Rashevskii), \cite{Chow39}.

I $\Rightarrow$ II
Assume $(M, d)$ is forward complete. Let $\gamma: [0, T) \to M$ be a geodesic with $F(\dot{\gamma}) = s > 0$, maximal forward. If $T < \infty$, take $t_n < t_m \to T^-$
$$
d(\gamma(t_n), \gamma(t_m)) \leq \int_{t_n}^{t_m} F(\dot{\gamma}) \, dt = s (t_m - t_n) \to 0.
$$
The sequence $\{\gamma(t_n)\}$ is forward Cauchy
 $$d(\gamma(t_n), \gamma(t_{n+1})) \leq s (t_{n+1} - t_n) \to 0.$$ By completeness, $\gamma(t_n) \to x$. Locally near $x$,
the sub-Hamiltonian equations \eqref{HE}
extend $\gamma$ beyond $T$, a contradiction. Thus, $T = \infty$.

II $\Rightarrow$ III
Assume forward geodesic completeness. For $y \in M$, connectivity implies existence of a horizontal curve from $x$ to $y$. By PMP, normal geodesics satisfy $H > 0$, abnormals $H = 0$. If abnormals do not minimize length, then any minimizing geodesic $\gamma$ from $x$ to $y$ is normal. Completeness ensures $\exp_x(t \xi) = y$ for some $\xi \in \mathcal{D}^*_x$. Since $H$ is smooth and $\mathcal{D}^*_x$ is finite-dimensional, the flow covers $M$ (by bracket-generating), making $\exp$ surjective.

III $\Rightarrow$ IV
Assume the exponential map is surjective, with abnormal geodesics not length-minimizing. Let $K \subset M$ be closed and forward bounded, i.e., $d(x, y) < r$ for $y \in K$. For each $y \in K$, there exists $\xi_y \in \mathcal{D}^*_x$ with $\exp_x(t_y \xi_y) = y$, where $t_y F(\xi_y) = d(x, y) < r$. Since $F(\xi_y) \geq (1 - \|\beta\|_a) \sqrt{a^*(\xi_y, \xi_y)}$, $\{\xi_y\}$ is bounded. The set $\{\xi \in \mathcal{D}^*_x \mid F(\xi) < r/(1 - \|\beta\|_a)\}$ is compact, and $\exp_x$ continuous, so $K \subseteq \exp_x(\text{bounded set})$ is compact as a closed subset of a compact set.

IV $\Rightarrow$ I
Assume closed, forward bounded sets are compact. Let $\{x_n\}$ be forward Cauchy with $\sum d(x_n, x_{n+1}) < \infty.$ Define $s_n = \sum_{k=n}^\infty d(x_k, x_{k+1})$; then $d(x_n, x_m) \leq s_n - s_m \to 0$ as $n < m \to \infty$. The set $K = \{x_n\}$ is forward bounded $$d(x_1, x_n) \leq \sum_{k=1}^{n-1} d(x_k, x_{k+1}) < \infty.$$ Since $\overline{K}$ is compact, $\{x_n\}$ has a convergent subsequence $x_{n_k} \to x$. For $\epsilon > 0$, choose $N$ so $s_n - s_m < \epsilon/2$ for $n, m > N$, and $d(x_{n_k}, x) < \epsilon/2$ for some $n_k > N$. Then
$$
d(x_n, x) \leq d(x_n, x_{n_k}) + d(x_{n_k}, x) < \epsilon,
$$
so $x_n \to x$. Thus, $(M, d)$ is forward complete.

Under condition  \textup{(IV)}, for $x, y \in M$, let $\Phi = \{\gamma \mid \gamma(0) = x, \gamma(1) = y, \text{horizontal}\}$. The set $K = \{z \mid d(x, z) \leq d(x, y) + 1\}$ is compact. A sequence $\{\gamma_n\} \subset \Phi$ with $\ell(\gamma_n) \to d(x, y)$ is equicontinuous since $$d(\gamma_n(t), \gamma_n(s)) \leq |t - s| (d(x, y) + 1).$$ By Arzel\`a-Ascoli theorem, $\gamma_{n_k} \to \gamma$ uniformly.
 Since $\ell$ is lower semicontinuous,
  $$\ell(\gamma)\leq\liminf_{k}\ell(\gamma_{n_k}) = d(x,y).$$
  By definition of $d$, $\ell(\gamma)\geq d(x,y)$, so $\ell(\gamma)=d(x,y)$ and $\gamma$ is a minimizing geodesic. If abnormal geodesics are not length-minimizing, then $\gamma$ is normal, hence $\gamma = \exp_x(t\xi)$ for some $\xi\in\mathcal{D}^*_x$. \end{proof}
\bigskip

\noindent
{\bf Acknowledgment:} We sincerely thank the Editors and the Reviewers for their time, effort, and constructive comments, which significantly improved the quality and clarity of this work.

\providecommand{\bysame}{\leavevmode\hbox
to3em{\hrulefill}\thinspace}

\end{document}